\newtheorem{thm}{Theorem}[section]
\newtheorem{proposition}[thm]{Proposition}
\newtheorem{corollary}[thm]{Corollary}
\newtheorem{definition}[thm]{Definition}
\newtheorem{theorem}[thm]{Theorem}
\newtheorem{lemma}[thm]{Lemma}
\newtheorem{remark}[thm]{Remark}
\newtheorem{example}[thm]{Example}
\begin{document}
\title[The Swift-Hohenberg equation on conic manifolds]
{The Swift-Hohenberg equation on conic manifolds}
\author{Nikolaos Roidos}
\address{Department of Mathematics, University of Patras, 26504 Rio Patras, Greece}
\email{roidos@math.upatras.gr}

\begin{abstract}
We consider the Swift-Hohenberg equation on manifolds with conical singularities and show existence, uniqueness and maximal regularity of the short time solution in terms of Mellin-Sobolev spaces. Moreover, we give a necessary and sufficient condition so that the above solution exists for all times. Space asymptotic expansion of the solution near the singularity is also provided and its relation to the local geometry is shown. The same problem is considered on closed manifolds and similar results are obtained by using the above singular analysis theory.
\end{abstract}

\subjclass[2010]{35K65; 35K90; 35K91; 35R01}
\date{\today}
\maketitle

\section{Introduction}

The Swift-Hohenberg equation is the parabolic evolution equation
\begin{eqnarray}\label{CH1}
u'(t)+(\Delta+1)^{2}u(t)&=&F(t,u(t)), \quad t\in(0,T),\\ \label{CH2}
u(0)&=&u_{0},
\end{eqnarray}
where $\Delta$ is the Laplace operator in a given domain, $T>0$ is finite and $F(s,x)$ is a polynomial in $x$ with $s$-dependent complex valued coefficients that are locally Lipschitz continuous in $s\in\mathbb{R}$. Here $u$ is regarded as a scalar field. \eqref{CH1}-\eqref{CH2} has several physical applications; it can model e.g. thermally convecting fluid flows \cite{SH}, cellular flows \cite{PM}, phenomena in optical physics \cite{TGM} etc. It is also well known for its pattern formation under evolution, see, e.g., \cite{CH}, \cite{Mi}, \cite{PT} and \cite{Ue}. 

We consider \eqref{CH1}-\eqref{CH2} on manifolds with conical singularities and employ the theory of cone differential operators together with maximal $L^{q}$-regularity theory for parabolic partial differential equations (PDE). As a first step, in Theorem \ref{RsecD} we show $R$-sectoriality for the Laplacian on manifolds with possibly warped conical tips. Instead of requiring uniform seminorm boundedness of certain parameter dependent families of cone pseudodifferential operators as in \cite[Theorem 5.6]{RS0}, we follow here a direct and independent proof based on a freezing-of-coefficients method. This result then implies maximal $L^{q}$-regularity for the bi-Laplacian in Lemma \ref{RD2}. Then, in Theorem \ref{Tshort}, by applying a theorem of Cl\'ement and Li we show existence, uniqueness and maximal $L^{q}$-regularity of a short time solution $u$ in terms of Mellin-Sobolev spaces. Maximal continuous regularity for $u$ is also provided.

Next, in Theorem \ref{longt}, we obtain the long time existence of the above solution, by imposing an additional assumption on the right hand side of \eqref{CH1}, which turns out to be necessary for this property. Namely, we show that if the $L^{q}$-norm on $(0,T)$ of the term $F(\cdot,u(\cdot))$ does not blow up in finite $T$, then the Banach fixed point argument of the Cl\'ement and Li theorem can be repeated successively and hence the well-posed solution exists for all times. The crucial step here is the interpolation space estimate of the solution, see \eqref{intbound}, which is obtained by using a maximal $L^{q}$-regularity inequality for linear parabolic problems.

As a consequence, information concerning the asymptotic behavior of the above solution close to the singularity is also provided and its relation to the local geometry is established. These space asymptotics are obtained either through the domain of the bi-Laplacian or through the real interpolation space between the bi-Laplacian domain and the underlying space, see \eqref{DD22} and Proposition \ref{lpo}. In both cases the solution is expressed as a sum of two terms. The first one is a sum of complex powers and logarithmic integer powers of the geodesic distance to the singularity; these powers are determined explicitly by the local geometry around the conical tip, see \eqref{DD22}. The second one is a remainder that belongs to a Mellin-Sobolev space and close to the singularity it decays to zero faster than each of the components of the previous term; its decay is determined by the local geometry as well as by the initial data.

Finally, we consider \eqref{CH1}-\eqref{CH2} on closed manifolds. Instead of using standard techniques, we employ geodesic polar coordinates and study the resulting degenerate problem by applying the above singular analysis theory. 

\section{Maximal $L^{q}$-regularity theory for linear and quasilinear parabolic problems}

In this section we present some basic functional analytic machinery related to the property of maximal $L^q$-regularity for linear and quasilinear parabolic problems. Let $X_1\overset{d}{\hookrightarrow} X_0$ be a continuously and densely injected complex Banach couple.

\begin{definition}[Sectorial operators]\label{sectorial}
Let $\mathcal{P}(K,\theta)$, $K\geq1$, $\theta\in[0,\pi)$, be the class of all closed densely defined linear operators $A$ in $X_{0}$ such that $S_{\theta}=\{\lambda\in\mathbb{C}\,|\, |\arg(\lambda)|\leq\theta\}\cup\{0\}\subset\rho{(-A)}$ and $(1+|\lambda|)\|(A+\lambda)^{-1}\|_{\mathcal{L}(X_{0})}\leq K$ when $\lambda\in S_{\theta}$. The elements in $\mathcal{P}(\theta)=\cup_{K\geq1}\mathcal{P}(K,\theta)$ are called {\em (invertible) sectorial operators of angle $\theta$}. If $A\in\mathcal{P}(\theta)$ then any $K\geq1$ such that $A\in \mathcal{P}(K,\theta)$ is called {\em sectorial bound of $A$}.
\end{definition}

If in the definition of sectoriality we replace the standard boundedness by the Rademacher boundedness, then we obtain the following property.

\begin{definition}[$R$-sectorial operators]
Denote by $\mathcal{R}(K,\theta)$, $K\geq1$, $\theta\in[0,\pi)$, the class of all operators $A\in \mathcal{P}(\theta)$ in $X_{0}$ such that for any choice of $\lambda_{1},...,\lambda_{N}\in S_{\theta}\backslash\{0\}$ and $x_{1},...,x_{N}\in X_0$, $N\in\mathbb{N}\backslash\{0\}$, we have
$$
\|\sum_{k=1}^{N}\epsilon_{k}\lambda_{k}(A+\lambda_{k})^{-1}x_{k}\|_{L^{2}(0,1;X_0)} \leq K \|\sum_{k=1}^{N}\epsilon_{k}x_{k}\|_{L^{2}(0,1;X_0)},
$$
where $\{\epsilon_{k}\}_{k=1}^{\infty}$ is the sequence of the Rademacher functions. The elements in $\mathcal{R}(\theta)=\cup_{K\geq1}\mathcal{R}(K,\theta)$ are called {\em $R$-sectorial operators of angle $\theta$}. If $A\in \mathcal{R}(\theta)$ then any $K\geq1$ such that $A\in \mathcal{R}(K,\theta)$ is called {\em $R$-bound of $A$}.
\end{definition} 

Sectorial operators admit a holomorphic functional calculus defined by the Dunford integral formula, see, e.g., \cite[Chapter 3.1.4]{PrS}. For any $\rho\geq0$ and $\theta\in(0,\pi)$ denote by $\Gamma_{\rho,\theta}$ the boundary of $S_{\theta}\cup \{\lambda\in\mathbb{C}\, |\, |\lambda|\leq\rho\}$ oriented counterclockwise. Moreover, we denote $\Gamma_{0,\theta}$ simply by $\Gamma_{\theta}$. Then, given any $A\in \mathcal{P}(\theta)$ we can define its complex powers $A^z$ for $\mathrm{Re}(z)<0$ by
$$
A^{z}=\frac{1}{2\pi i}\int_{\Gamma_{\rho,\theta}}(-\lambda)^{z}(A+\lambda)^{-1}d\lambda \in \mathcal{L}(X_{0}),
$$
for certain $\rho>0$. The above family together with $A^{0} = I$ is a strongly continuous holomorphic semigroup on $X_{0}$, see, e.g., \cite[Theorem III.4.6.2]{Am} and \cite[Theorem III.4.6.5]{Am}. The definition can be extended to any $z\in\mathbb{C}$, and for $\mathrm{Re}(z)\geq0$, $A^{z}$ are in general unbounded operators, see, e.g., \cite[Theorem III.4.6.5]{Am}. Of particular interest from the PDE point of view is the case when the purely imaginary powers turn out to be bounded operators. 

\begin{definition}[Bounded imaginary powers]
Let $A\in\mathcal{P}(\theta)$, $\theta\in[0,\pi)$, in $X_{0}$. We say that $A$ has {\em bounded imaginary powers} if there exists some $\varepsilon>0$ and $\delta\geq1$ such that $A^{it}\in\mathcal{L}(X_{0})$ and $\|A^{it}\|_{\mathcal{L}(X_{0})}\leq \delta$ for all $t\in[-\varepsilon,\varepsilon]$. In this case, {\em(}see, e.g., \cite[Corollary III.4.7.2]{Am}{\em)} $A^{it}\in\mathcal{L}(X_{0})$ for all $t\in\mathbb{R}$ and there exists a $\phi\geq0$, called {\em power angle of $A$}, such that $\|A^{it}\|_{\mathcal{L}(X_{0})}\leq M e^{\phi|t|}$, $t\in\mathbb{R}$, with some $M\geq1$. We write $A\in\mathcal{BIP}(\phi)$.
\end{definition} 

Next we focus on parabolic PDE. Denote by $L^{q}(0,T;X_{0})$ and $W^{1,q}(0,T;X_{0})$, $q\in(1,\infty)$, $T>0$, the space of the $X_{0}$-valued $L^{q}$ and $W^{1,q}$ functions respectively. By starting with the linear theory we recall the following property.

\begin{definition}[Maximal $L^q$-regularity]
Let $q\in(1,\infty)$, $T>0$, $-A:\mathcal{D}(A)=X_{1}\rightarrow X_{0}$ be the infinitesimal generator of an analytic semigroup on $X_{0}$ and consider the Cauchy problem
\begin{eqnarray}\label{AP}
u'(t)+Au(t)&=&g(t), \quad t\in(0,T),\\\label{AP2}
u(0)&=&0.
\end{eqnarray}
The operator $A$ has {\em maximal $L^{q}$-regularity} if for any $g\in L^{q}(0,T;X_{0})$ there exists a unique $u\in W^{1,q}(0,T;X_{0})\cap L^{q}(0,T;X_{1})$ solving \eqref{AP}-\eqref{AP2}, that depends continuously on $g$. 
\end{definition}

Note that the above property is $T$-independent and according to a result of G. Dore, also $q$-independent. Moreover, by setting $u(t)=e^{ct}v(t)$, $c>0$, we can insert a positive shift $c$ to the operator $A$ in \eqref{AP}. Finally, we recall the standard embedding of the maximal $L^{q}$-regularity space, namely
\begin{gather}\label{embmaxreg}
W^{1,q}(\tau_{1},\tau_{2};X_{0})\cap L^{q}(\tau_{1},\tau_{2};X_{1})\hookrightarrow C([\tau_{1},\tau_{2}];(X_{1},X_{0})_{\frac{1}{q},q}),\quad 0\leq\tau_{1}<\tau_{2}\leq T<\infty,
\end{gather}
see, e.g., \cite[Theorem III.4.10.2]{Am}; note that if we restrict to the subspace of $W^{1,q}(\tau_{1},\tau_{2};X_{0})\cap L^{q}(\tau_{1},\tau_{2};X_{1})$ consisting of functions $u$ that satisfy $u(0)=0$, then the norm of the above embedding is independent of $\tau_{1},\tau_{2}\in[0,T]$, see, e.g., \cite[Corollary 2.3]{CL}. Here $(X_{0},X_{1})_{\xi,q}$, $\xi\in(0,1)$, denotes real interpolation. 

All the spaces we consider in the sequel belong to the class of UMD (unconditionality of martingale differences property, see, e.g., \cite[Section III.4.4]{Am}) spaces. By using the underlying geometric properties of such spaces, we can relate the boundedness of the imaginary powers of an operator to the maximal $L^q$-regularity by the following classical result.

\begin{theorem}{\rm (Dore and Venni, \cite[Theorem 3.2]{DV}).}\label{rem1}
If $X_{0}$ is UMD and $A\in \mathcal{BIP}(\phi)$ in $X_{0}$ with $\phi<\frac{\pi}{2}$, then $A$ has maximal $L^q$-regularity.
\end{theorem}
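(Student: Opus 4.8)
The plan is to recast maximal $L^{q}$-regularity as the invertibility, with bounded inverse, of a sum of two commuting operators on the base space $E=L^{q}(0,T;X_{0})$. On $E$ I would introduce the time derivative $B=d/dt$ with domain $\{u\in W^{1,q}(0,T;X_{0})\,|\,u(0)=0\}$, and the pointwise realization $\mathbf{A}$ of $A$, given by $(\mathbf{A}u)(t)=Au(t)$ with domain $L^{q}(0,T;X_{1})$. Solving \eqref{AP}--\eqref{AP2} with $u\in W^{1,q}(0,T;X_{0})\cap L^{q}(0,T;X_{1})$ for every right hand side is then precisely the statement that $B+\mathbf{A}$, defined on $D(B)\cap D(\mathbf{A})$, is invertible with $(B+\mathbf{A})^{-1}\in\mathcal{L}(E)$; the two a priori estimates $\|Bu\|_{E},\|\mathbf{A}u\|_{E}\leq C\|g\|_{E}$ are exactly the regularity assertion. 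On a finite interval $B$ is invertible, its inverse being $g\mapsto\int_{0}^{\cdot}g$, while $\mathbf{A}$ is invertible because $0\in\rho(-A)$ by Definition \ref{sectorial}; moreover their resolvents commute. Since $\phi<\frac{\pi}{2}$ forces the spectral angle of $A$ below $\frac{\pi}{2}$, $-A$ indeed generates a bounded analytic semigroup, so the setup of \eqref{AP} is legitimate.

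Next I would verify the hypotheses of a sum-of-operators theorem for $B$ and $\mathbf{A}$. The operator $\mathbf{A}$ inherits $\mathbf{A}\in\mathcal{BIP}(\phi)$ on $E$ with the same power angle, since $\mathbf{A}^{it}$ acts pointwise as $A^{it}$ and passing to $L^{q}$ of a bounded operator preserves the norm bound. The crucial and less routine input is that $B=d/dt\in\mathcal{BIP}(\frac{\pi}{2})$ on $E$. I would establish this by extending to the line, conjugating by the Fourier transform, and identifying $B^{is}$ with the operator-valued Fourier multiplier of symbol $(i\xi)^{is}$. Here the UMD hypothesis is indispensable: the uniform $L^{q}(\mathbb{R};X_{0})$-boundedness of such multipliers follows from the boundedness of the vector-valued Hilbert transform, equivalently the Mikhlin-type multiplier theorem that holds exactly in UMD spaces. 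This yields $\|B^{is}\|_{\mathcal{L}(E)}\leq Me^{\frac{\pi}{2}|s|}$, i.e. power angle $\frac{\pi}{2}$.

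With $\phi+\frac{\pi}{2}<\pi$ there is a parameter $c\in(0,1)$ such that, along the contour $\mathrm{Re}(z)=c$, the Mellin--Barnes representation
\[
(B+\mathbf{A})^{-1}=\frac{1}{2\pi i}\int_{\mathrm{Re}(z)=c}\frac{\pi}{\sin \pi z}\,\mathbf{A}^{-z}B^{z-1}\,dz
\]
is meaningful. I would first show convergence in $\mathcal{L}(E)$: on $z=c+is$ one has $\|\mathbf{A}^{-z}\|\lesssim e^{\phi|s|}$ and $\|B^{z-1}\|\lesssim e^{\frac{\pi}{2}|s|}$, while $|\pi/\sin\pi z|\lesssim e^{-\pi|s|}$, so the integrand decays like $e^{(\phi+\frac{\pi}{2}-\pi)|s|}$, integrable precisely because the power angles sum below $\pi$. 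A contour shift picking up the residue of $\pi/\sin\pi z$ at the integers, together with the commutation of resolvents, then identifies this operator as a genuine two-sided inverse of $B+\mathbf{A}$ on $D(B)\cap D(\mathbf{A})$.

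The main obstacle, and the second place where UMD is essential, is to prove that $\mathbf{A}(B+\mathbf{A})^{-1}$ and $B(B+\mathbf{A})^{-1}$ are bounded on $E$, which is exactly the maximal regularity estimate. One cannot simply move $\mathbf{A}$ inside the integral and take norms, because $\mathbf{A}^{1-z}$ for $\mathrm{Re}(z)<1$ has positive real exponent and the resulting factor $\mathbf{A}^{1-c}B^{c-1}$ is genuinely unbounded. Instead I would organize $\mathbf{A}(B+\mathbf{A})^{-1}$ as an operator-valued singular integral, equivalently a Fourier multiplier built from symbols of the form $\mathbf{A}(\mathbf{A}+i\xi)^{-1}$, and invoke once more the boundedness of such multipliers on $L^{q}(\mathbb{R};X_{0})$ guaranteed by UMD, with the power-angle bounds controlling the multiplier norms. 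Finally I would transfer the line result back to $(0,T)$ by restriction and extension, and record that the shift $u=e^{ct}v$ noted after \eqref{embmaxreg} reduces the general case to the invertible one; by G. Dore's theorem the resulting property is then independent of $q$.
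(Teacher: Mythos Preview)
The paper does not prove this theorem; it is simply stated with a citation to \cite{DV} and used as input elsewhere. Your outline is, up to the last paragraph, the original Dore--Venni strategy: reformulate maximal $L^{q}$-regularity as closedness and invertibility of $B+\mathbf{A}$ on $L^{q}(0,T;X_{0})$, verify $\mathbf{A}\in\mathcal{BIP}(\phi)$ pointwise and $B=d/dt\in\mathcal{BIP}(\frac{\pi}{2})$ via the vector-valued Hilbert transform on UMD spaces, and produce the candidate inverse by the Mellin--Barnes integral, whose absolute convergence is precisely the angle condition $\phi+\frac{\pi}{2}<\pi$.

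The genuine gap is your final step. You propose to bound $\mathbf{A}(B+\mathbf{A})^{-1}$ by reading it as the operator-valued Fourier multiplier with symbol $\xi\mapsto A(A+i\xi)^{-1}$ and asserting that its $L^{q}(\mathbb{R};X_{0})$-boundedness is ``guaranteed by UMD''. This is not true: UMD secures boundedness of \emph{scalar} Mikhlin multipliers only; for operator-valued symbols one needs, in addition, $R$-boundedness of the symbol family, which is exactly the content of Weis's theorem---Theorem~\ref{KaW} here. Invoking that would make Dore--Venni a corollary of Kalton--Weis, and you would still owe the $R$-boundedness of $\{A(A+i\xi)^{-1}\}_{\xi}$, which is not among your hypotheses. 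The actual Dore--Venni argument never leaves the BIP framework: one first shows $S(A+B)=I$ on the dense subspace $D(A)\cap D(B)$ by the substitution $z\mapsto z+1$ in the Mellin--Barnes integral (the two resulting contours differ by the residue at $z=0$, which is the identity), and a matching contour shift across $z=1$ shows that $S$ maps into $D(A)\cap D(B)$ with $(A+B)S=I$; all estimates use only $\|\mathbf{A}^{is}\|$, $\|B^{is}\|$, and the scalar UMD input, never an operator-valued multiplier theorem.
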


However, in UMD spaces the $R$-sectoriality is weaker than the boundedness of the imaginary powers property (see \cite[Theorem 4]{CP}) and it turns out to be sufficient for maximal $L^q$-regularity; it actually leads to characterization of this property in UMD spaces due to \cite[Theorem 4.2]{W}.

\begin{theorem}{\rm (Kalton and Weis, \cite[Theorem 6.5]{KW1}).}\label{KaW}
If $X_{0}$ is UMD and $A\in \mathcal{R}(\theta)$ in $X_{0}$ with $\theta>\frac{\pi}{2}$, then $A$ has maximal $L^q$-regularity.
\end{theorem}

Next we state the well known version of Theorem \ref{KaW} for non-zero initial data. In comparison to the standard version, Proposition \ref{propmax}, whose proof can be seen in the Appendix, contains better information on the constants.

\begin{proposition}\label{propmax}
Let $A:\mathcal{D}(A)=X_{1}\rightarrow X_{0}$ such that $A\in \mathcal{P}(K,\theta)\cap \mathcal{R}(R,\theta)$, for certain $K,R\geq1$ and $\theta\in(\frac{\pi}{2},\pi)$. Assume that $X_{0}$ is UMD and let $g\in \cap_{\tau>0}L^{q}(0,\tau;X_{0})$ and $u_{0}\in (X_{1},X_{0})_{\frac{1}{q},{q}}$, for some $q\in(1,\infty)$. Then, for any $T>0$ the Cauchy problem 
\begin{eqnarray}\label{APF}
u'(t)+Au(t)&=&g(t), \quad t\in(0,T),\\\label{thr}
u(0)&=&u_{0} 
\end{eqnarray}
has a unique solution $u\in W^{1,q}(0,T;X_{0})\cap L^{q}(0,T;X_{1})$ and the following estimate holds
\begin{gather}\label{maxreg}
\|u\|_{W^{1,q}(0,T;X_{0})}+\|u\|_{L^{q}(0,T;X_{1})}\leq C(\|g\|_{L^{q}(0,T;X_{0})}+\|u_{0}\|_{(X_{1},X_{0})_{\frac{1}{q},{q}}}),
\end{gather}
with some constant $C>0$ that only depends on $A$, $R$, $\theta$, and $q$. 

If in addition $g\in \cap_{\tau>0}L^{\infty}(0,\tau;(X_{0},X_{1})_{\phi,\eta})$ for some $\phi\in(0,1)$ and $\eta\in(1,\infty)$, then $u(t)\in X_{1}$ for each $t>0$, and for any $\alpha\in(0,\phi)$ and $\beta\in(-1,-\frac{1}{q})$ the following estimate holds
\begin{gather}\label{x1bound}
\|u(t)\|_{X_{1}}\leq Mt^{\alpha}\|g\|_{L^{\infty}(0,t;(X_{0},X_{1})_{\phi,\eta})}+L t^{\beta}\|u_{0}\|_{(X_{1},X_{0})_{\frac{1}{q},q}}, \quad t>0,
\end{gather}
with some constants $M>0$ and $L>0$ that only depend on $A$, $K$, $\theta$, $\phi$, $\eta$, $\alpha$ and $A$, $K$, $\theta$, $q$, $\beta$ respectively. Moreover, if $g\in C([0,\infty);(X_{0},X_{1})_{\phi,\eta})$, then 
\begin{gather}\label{contregabs}
u\in C^{1}((0,\infty);X_{0}) \cap C((0,\infty);X_{1}).
\end{gather}
\end{proposition}

\begin{corollary}\label{cormax}
Let the assumptions of Proposition \ref{propmax} be satisfied with $A$ replaced by $A+c$, for a certain $c\geq0$. Then, for any $T>0$ the Cauchy problem \eqref{APF}-\eqref{thr} has a unique solution $u\in W^{1,q}(0,T;X_{0})\cap L^{q}(0,T;X_{1})$ and the following estimate holds
\begin{gather}\label{maxreg2}
\|u\|_{W^{1,q}(0,T;X_{0})}+\|u\|_{L^{q}(0,T;X_{1})}\leq C e^{cT}(\|g\|_{L^{q}(0,T;X_{0})}+\|u_{0}\|_{(X_{1},X_{0})_{\frac{1}{q},{q}}}),
\end{gather}
with some constant $C>0$ that only depends on $A$, $R$, $\theta$, $q$, and $c$. 
\end{corollary}
\begin{proof}
Follows by setting $u(t)=e^{ct}f(t)$ in \eqref{APF}-\eqref{thr} and then applying Proposition \ref{propmax} and \eqref{maxreg} to the resulting problem.
\end{proof}

We regard \eqref{CH1}-\eqref{CH2} as a quasilinear parabolic equation. Hence, we consider problems of the form 
\begin{eqnarray}\label{QL}
u'(t)+A(u(t))u(t)&=&G(t,u(t))+g(t),\quad t\in(0,T),\\\label{QL2}
u(0)&=&u_{0}.
\end{eqnarray}
We end this section with the following well known short time maximal $L^{q}$-regularity result.

\begin{theorem}\label{CL} {\rm (Cl\'ement and Li, \cite[Theorem 2.1]{CL}).}
Let $u_{0}\in (X_{1},X_{0})_{\frac{1}{q},{q}}$ for some $q\in(1,\infty)$ and let $U\subset (X_{1},X_{0})_{\frac{1}{q},{q}}$ be an open set such that $u_{0}\in U$. Assume that{\em:}\\
{\em (H1)} $A(u_0): X_{1}\rightarrow X_{0}$ has maximal $L^{q}$-regularity.\\
{\em (H2)} $A(\cdot)\in C^{1-}(U; \mathcal{L}(X_1,X_0))$.\\
{\em (H3)} $G(\cdot,\cdot)\in C^{1-,1-}([0,T_0]\times U; X_0)$ for certain $T_{0}>0$.\\
{\em (H4)} $g\in L^q(0,T_0; X_0)$.\\
Then, there exists a $T\in(0,T_{0}]$ and a unique $u\in W^{1,q}(0,T;X_{0})\cap L^{q}(0,T;X_{1})$ solving the equation \eqref{QL}-\eqref{QL2} on $[0,T)$.
\end{theorem}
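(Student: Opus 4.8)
The plan is to solve \eqref{QL}-\eqref{QL2} by a Banach fixed point (contraction) argument after \emph{freezing} the quasilinear coefficient at the initial datum. Write $X_{\gamma}:=(X_{1},X_{0})_{\frac{1}{q},q}$ and, for $T\in(0,T_{0}]$, put $\mathbb{E}_{T}:=W^{1,q}(0,T;X_{0})\cap L^{q}(0,T;X_{1})$; denote by $L_{A},L_{G}$ the Lipschitz constants furnished by (H2), (H3) on $U$ (resp. $[0,T_{0}]\times U$). Set $A_{0}:=A(u_{0})$, which by (H1) has maximal $L^{q}$-regularity; consequently, as in Theorem \ref{propmax} and \eqref{maxreg}, the frozen problem $v'+A_{0}v=f$, $v(0)=w$ is uniquely solvable in $\mathbb{E}_{T}$, and the solution operator for the zero-initial-data part is bounded $L^{q}(0,T;X_{0})\to\mathbb{E}_{T}$ with a norm $C_{0}$ \emph{independent of} $T$ (extend $f$ by zero to $(0,\infty)$). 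Let $u_{\ast}(t):=e^{-tA_{0}}u_{0}$ solve the homogeneous frozen problem. For $R>0$ to be fixed, consider the complete metric space
\[
\Sigma_{T,R}:=\{v\in\mathbb{E}_{T}\,:\,v(0)=u_{0},\ \|v-u_{\ast}\|_{\mathbb{E}_{T}}\le R\},
\]
with the metric induced by $\|\cdot\|_{\mathbb{E}_{T}}$, and define $\Phi\colon\Sigma_{T,R}\to\mathbb{E}_{T}$ by letting $\Phi(v)$ be the solution of
\[
(\Phi(v))'+A_{0}\,\Phi(v)=[A_{0}-A(v)]v+G(\cdot,v)+g,\qquad \Phi(v)(0)=u_{0}.
\]
Hypotheses (H2)-(H4) and the embedding \eqref{embmaxreg} guarantee the right-hand side lies in $L^{q}(0,T;X_{0})$ (the term $[A_{0}-A(v(t))]v(t)$ is controlled by $\|A_{0}-A(v(t))\|_{\mathcal{L}(X_{1},X_{0})}\|v(t)\|_{X_{1}}$, with $v\in L^{q}(0,T;X_{1})$ and $v(t)\in X_{\gamma}$ bounded), so $\Phi$ is well defined and each of its fixed points solves \eqref{QL}-\eqref{QL2}.

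First I would run the self-mapping estimate. Since $\Phi(v)-u_{\ast}$ has zero initial trace, the $T$-independent bound gives $\|\Phi(v)-u_{\ast}\|_{\mathbb{E}_{T}}\le C_{0}\|[A_{0}-A(v)]v+G(\cdot,v)+g\|_{L^{q}(0,T;X_{0})}$. Writing $v(t)-u_{0}=(v(t)-u_{\ast}(t))+(u_{\ast}(t)-u_{0})$ and applying the zero-trace form of \eqref{embmaxreg} (with $T$-independent constant $c_{\ast}$) to the first summand, together with strong continuity $\|e^{-tA_{0}}u_{0}-u_{0}\|_{X_{\gamma}}\to0$ for the second, one bounds $\sup_{t\le T}\|v(t)-u_{0}\|_{X_{\gamma}}\le c_{\ast}R+\omega(T)$ with $\omega(T)\to0$ as $T\to0$. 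Combining this with (H2),(H3) and $\|v\|_{L^{q}(0,T;X_{1})}\le R+\|u_{\ast}\|_{L^{q}(0,T;X_{1})}$, and noting $\|u_{\ast}\|_{L^{q}(0,T;X_{1})}$, $\|G(\cdot,u_{0})\|_{L^{q}(0,T;X_{0})}$, $\|g\|_{L^{q}(0,T;X_{0})}$ all tend to $0$ as $T\to0$, the right-hand side is dominated by
\[
C_{0}\big[L_{A}(c_{\ast}R+\omega(T))(R+o(1))+L_{G}T^{1/q}(c_{\ast}R+\omega(T))+o(1)\big].
\]
The leading term is $\sim C_{0}L_{A}c_{\ast}R^{2}$, which is $\le R/2$ once $R$ is small; the remainder carries factors tending to $0$ with $T$, hence is $\le R/2$ for $T$ small. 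Thus $\Phi(\Sigma_{T,R})\subset\Sigma_{T,R}$.

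For the contraction, given $v_{1},v_{2}\in\Sigma_{T,R}$ the difference $\Phi(v_{1})-\Phi(v_{2})$ again has zero trace and solves a frozen problem with right-hand side $[A_{0}-A(v_{1})](v_{1}-v_{2})+[A(v_{2})-A(v_{1})]v_{2}+G(\cdot,v_{1})-G(\cdot,v_{2})$. The same bound together with the Lipschitz estimates yields
\[
\|\Phi(v_{1})-\Phi(v_{2})\|_{\mathbb{E}_{T}}\le C_{0}\big[L_{A}(c_{\ast}R+\omega(T))+L_{A}c_{\ast}(R+o(1))+L_{G}c_{\ast}T^{1/q}\big]\|v_{1}-v_{2}\|_{\mathbb{E}_{T}},
\]
and shrinking $R$ and then $T$ makes the bracket $\le1/2$. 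Hence $\Phi$ is a contraction on the complete space $\Sigma_{T,R}$, and by the Banach fixed point theorem it has a unique fixed point $u\in\Sigma_{T,R}$, i.e. a solution of \eqref{QL}-\eqref{QL2} on $(0,T)$. Uniqueness within all of $\mathbb{E}_{T}$ (not merely in $\Sigma_{T,R}$) follows because any two $\mathbb{E}_{T}$-solutions satisfy the same contraction estimate on a possibly smaller interval, forcing agreement near $t=0$, which then propagates forward.

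The step I expect to be the main obstacle is arranging the constants so that the two smallness requirements are compatible: the genuinely quasilinear term $[A_{0}-A(v)]v$ cannot be made small through $\|v\|_{X_{1}}$, which is of order $R$ rather than $o(R)$, so \emph{all} smallness must be extracted from the coefficient $A_{0}-A(v)$, i.e. from $c_{\ast}R+\omega(T)$. This dictates the order of the choices — first fix $R$ small enough to beat the $R$-linear contraction terms, then $T$ small enough to absorb the $\omega(T)$ and $o(1)$ contributions — and it relies crucially on both the maximal-regularity constant $C_{0}$ and the zero-trace embedding constant $c_{\ast}$ being \emph{independent of $T$}, which is exactly what \eqref{maxreg}, \eqref{embmaxreg} and Theorem \ref{propmax} supply.
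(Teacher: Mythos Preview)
The paper does not give its own proof of this theorem; it is simply quoted from Cl\'ement and Li. However, the structure of the original argument is described later in the proof of Theorem~\ref{longt}, and your proposal matches it: freeze the coefficient at $u_{0}$, set up a closed ball in the maximal-regularity space, and run Banach's fixed point theorem, with the key inputs being the $T$-independence of the maximal-regularity constant and of the trace embedding \eqref{embmaxreg}. The one cosmetic difference is your choice of center: you take the semigroup orbit $u_{\ast}=e^{-tA_{0}}u_{0}$, whereas the Cl\'ement--Li scheme (as recounted in the paper) centers the ball at the solution $h$ of the \emph{full} frozen linear problem $h'+A_{0}h=G(\cdot,u_{0})+g$, $h(0)=u_{0}$. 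Either center works; yours shifts the terms $G(\cdot,u_{0})$ and $g$ into the ``$o(1)$ as $T\to0$'' part of the self-mapping estimate, while theirs absorbs them into $h$ from the start and instead controls $\|h-u_{0}\|_{C([0,T];X_{\gamma})}$ and $\|h\|_{\mathbb{E}_{T}}$ directly (these are the quantities $\phi(T_{1})$ and $\psi(T_{1})$ in the paper). The order of choices you identify---first $R$ small, then $T$ small---is exactly what the paper invokes via the references to \cite[(2.13)]{CL} and \cite[(2.17)]{CL}.
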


\section{The Laplacian on manifolds with conical singularities}

Let $\mathcal{B}$ be an $(n+1)$-dimensional, $n\geq1$, smooth compact connected manifold with possibly disconnected boundary $\partial \mathcal{B}$. We endow $\mathcal{B}$ with a Riemannian metric $\mathfrak{g}$ such that when it is restricted to the collar neighborhood $[0,1)\times \partial \mathcal{B}$ of the boundary, in local coordinates it admits the warped product form 
\begin{gather}\label{metric}
\mathfrak{g}|_{[0,1)\times \partial \mathcal{B}}=dx^{2}+x^{2}\mathfrak{h}(x),
\end{gather}
where $x\in [0,1)$ and the map $x\mapsto \mathfrak{h}(x)$ is a smooth up to $x=0$ family of Riemannian metrics on the cross section $\partial \mathcal{B}$. We call $\mathbb{B}=(\mathcal{B},\mathfrak{g})$ {\em conic manifold} or {\em manifold with conical singularities}, which are identified with the subset $\{0\}\times\partial \mathcal{B}$ of $\mathcal{B}$. If $\mathfrak{h}$ is independent of $x$ when $x$ is close to zero, then we have straight conical tips. Denote $\partial\mathbb{B}=(\partial \mathcal{B},\mathfrak{h}(0))$ and $\partial\mathcal{B}=\sqcup_{i=1}^{k_{\mathcal{B}}}\partial\mathcal{B}_{i}$, for certain $k_{\mathcal{B}}\in\mathbb{N}\backslash\{0\}$, where $\partial\mathcal{B}_{i}$ are smooth, closed and connected.

The naturally appearing differential operators on a conic manifold belong to the class of {\em cone differential operators} or {\em Fuchs type operators}. An $\mu$-th order cone differential operator $A$, $\mu\in\mathbb{N}$, is an $\mu$-th order differential operator with smooth coefficients in the interior $\mathcal{B}^{\circ}$ of $\mathcal{B}$ such that near the boundary, e.g., on the collar neighborhood $(0,1)\times \partial\mathcal{B}$, it admits the following form $A=x^{-\mu}\sum_{k=0}^{\mu}a_{k}(x)(x\partial_{x})^{k}$, where $a_{k}\in C^{\infty}([0,1);\mathrm{Diff}^{\mu-k}(\partial\mathbb{B}))$. For details concerning the associated pseudodifferential theory, that is called {\em cone calculus}, we refer to \cite{GM}, \cite{GKM}, \cite{Le}, \cite{RS0}, \cite{RS2}, \cite{RS3}, \cite{SS1}, \cite{SS2}, \cite{Sh}, \cite{Schu} and \cite{Sei}. In the sequel we recall some basic facts focussing more on the theory of the Laplacian. 

Cone differential operators preserve the scales of weighted {\em Mellin-Sobolev spaces} $\mathcal{H}_{p}^{s,\gamma}(\mathbb{B})$, $p\in(1,\infty)$, $s,\gamma\in\mathbb{R}$, i.e. such an operator $A$ of order $\mu$ induces a bounded map from $\mathcal{H}^{s+\mu,\gamma+\mu}_p(\mathbb{B})$ to $\mathcal{H}^{s,\gamma}_p(\mathbb{B})$. Let $\omega$ be a fixed cut-off function near $\{0\}\times\partial \mathcal{B}$, i.e. a smooth non-negative function $\omega$ on $\mathcal{B}$ such that $\omega=1$ near $\{0\}\times\partial \mathcal{B}$ and $\omega=0$ outside $[0,1)\times \partial \mathcal{B}$; for simplicity we choose $\omega$ to be only $x$-dependent. If we denote by $C_{c}^{\infty}$ the space of smooth compactly supported functions, then we have the following.

\begin{definition}[Mellin-Sobolev spaces]
For any $\gamma\in\mathbb{R}$ consider the map $M_{\gamma}: C_{c}^{\infty}(\mathbb{R}_{+}\times\mathbb{R}^{n})\rightarrow C_{c}^{\infty}(\mathbb{R}^{n+1})$ defined by $u(x,y)\mapsto e^{(\gamma-\frac{n+1}{2})x}u(e^{-x},y)$. Furthermore, take a covering $\kappa_{i}:U_{i}\subseteq\partial\mathcal{B} \rightarrow\mathbb{R}^{n}$, $i\in\{1,...,N\}$, $N\in\mathbb{N}\backslash\{0\}$, of $\partial\mathcal{B}$ by coordinate charts and let $\{\phi_{i}\}_{i\in\{1,...,N\}}$ be a subordinated partition of unity. For any $p\in(1,\infty)$ and $s\in\mathbb{R}$ let $\mathcal{H}^{s,\gamma}_p(\mathbb{B})$ be the space of all distributions $u$ on $\mathbb{B}^{\circ}$ such that 
$$
\|u\|_{\mathcal{H}^{s,\gamma}_p(\mathbb{B})}=\sum_{i=1}^{N}\|M_{\gamma}(1\otimes \kappa_{i})_{\ast}(\omega\phi_{i} u)\|_{H^{s}_p(\mathbb{R}^{n+1})}+\|(1-\omega)u\|_{H^{s}_p(\mathbb{B})}
$$
is defined and finite, where $\ast$ refers to the push-forward of distributions. 
\end{definition}

Up to norm equivalence, $\mathcal{H}_{p}^{s,\gamma}(\mathbb{B})$ is independent of the choice of the covering $\{\kappa_{i}\}_{i\in\{1,...,N\}}$, the partition $\{\phi_{i}\}_{i\in\{1,...,N\}}$ and the cut-off function $
\omega$. Moreover, since the usual Sobolev spaces are UMD, by \cite[Theorem III.4.5.2]{Am}, the Mellin-Sobolev spaces are also UMD. In addition, if $s\in \mathbb{N}$, then $\mathcal{H}^{s,\gamma}_p(\mathbb{B})$ is the space of all functions $u$ in $H^s_{p,loc}(\mathbb{B}^\circ)$ such that, near the boundary
$$
x^{\frac{n+1}2-\gamma}(x\partial_x)^{k}\partial_y^{\alpha}(\omega(x) u(x,y)) \in L^{p}([0,1)\times \partial \mathcal{B}, \sqrt{\mathrm{det}[\mathfrak{h}(x)]}\frac{dx}xdy),\quad k+|\alpha|\leq s.
$$

We regard a cone differential operator $A$ of order $\mu\geq1$ as an unbounded operator in $\mathcal{H}^{s,\gamma}_p(\mathbb{B})$ with domain $C_{c}^{\infty}(\mathbb{B}^{\circ})$. If $A$ is $\mathbb{B}$-elliptic, see \cite[Definition 2.1]{Sh}, then the domain of its minimal extension (i.e. its closure) $\underline{A}_{s,\min}$ differs from the domain of the maximal extension $\underline{A}_{s,\max}$ of $A$, which is defined as usual by $\mathcal{D}(\underline{A}_{s,\max})=\{u\in\mathcal{H}^{s,\gamma}_p(\mathbb{B}) \, |\, Au\in \mathcal{H}^{s,\gamma}_p(\mathbb{B})\}$, by an $s$-independent finite-dimensional space $\mathcal{E}_{A,\gamma}$. The space $\mathcal{E}_{A,\gamma}$, which is called {\em asymptotics space}, consists of smooth functions on $\mathbb{B}^{\circ}$ that vanish on $\mathcal{B}\backslash([0,1)\times\partial\mathcal{B})$ and in local coordinates on $(0,1)\times\partial\mathcal{B}$ they are linear combinations of functions of the form $\omega(x)c(y)x^{-\rho}\log^{\kappa}(x)$, where $c\in C^{\infty}(\partial\mathbb{B})$, $\rho\in\mathbb{C}$ and $\kappa\in\mathbb{N}$. The exponents $\rho$ lie in the strip $I_{\gamma,\mu}=\{\lambda\in\mathbb{C}\, |\, \mathrm{Re}(\lambda)\in [\frac{n+1}{2}-\gamma-\mu,\frac{n+1}{2}-\gamma)\}$ and together with the exponents $\kappa$ they are determined explicitly by the coefficients of $A$. Moreover, there is a one to one correspondence between the possible closed extensions of $A$, called also {\em realizations}, and the subspaces of $\mathcal{E}_{A,\gamma}$.

The Laplacian $\Delta$ induced by the metric \eqref{metric} on the collar part $(0,1)\times \partial\mathcal{B}$ has the following form
$$
\Delta=\frac{1}{x^{2}}\Big((x\partial_x)^{2}+(n-1+\frac{x\partial_x (\det[\mathfrak{h}(x)])}{2\det[\mathfrak{h}(x)]})(x\partial_x )+\Delta_{\mathfrak{h}(x)} \Big),
$$
where $\Delta_{\mathfrak{h}(x)}$ is the Laplacian on $\partial\mathcal{B}$ induced by the metric $\mathfrak{h}(x)$. $\Delta$ is a second order $\mathbb{B}$-elliptic cone differential operator. 

Denote by $\mathbb{C}_{\omega}$ the space of all $C^{\infty}(\mathbb{B}^\circ)$ functions $c$ that vanish on $\mathcal{B}\backslash([0,1)\times\partial\mathcal{B})$ and on each component $[0,1)\times\partial\mathcal{B}_{i}$, $i\in\{1,...,k_{\mathbb{B}}\}$, they are of the form $c_{i}\omega$, where $c_{i}\in\mathbb{C}$, i.e. $\mathbb{C}_{\omega}$ consists of smooth functions that are locally constant close to the boundary. Endow $\mathbb{C}_{\omega}$ with the norm given by $c\mapsto (\sum_{i=1}^{k_{\mathcal{B}}}|c_{i}|^{2})^{\frac{1}{2}}$. If $\gamma\in(\frac{n-3}{2},\frac{n+1}{2})$ then zero is a pole of the inverse of the {\em conormal symbol} of $\Delta$, i.e. of the map $\mathbb{C}\ni \lambda\mapsto \lambda^{2}-(n-1)\lambda+\Delta_{\mathfrak{h}(0)} \in \mathcal{L}(H_{2}^{2}(\partial\mathbb{B}),H_{2}^{0}(\partial\mathbb{B}))$, see, e.g., \cite[Section 2.1]{Sh} for definition, that lies in the strip $I_{\gamma,2}$. In this situation $\mathbb{C}_{\omega}$ is a subspace of $\mathcal{E}_{\Delta,\gamma}$. Such a realization can satisfy the property of maximal $L^q$-regularity as we can see in the following result.

\begin{theorem}\label{RsecD}
Let $p\in(1,\infty)$, $s\geq0$ and 
\begin{eqnarray}\label{choicegg}
\frac{n-3}2<\gamma<\min\Big\{-1+\sqrt{\Big(\frac{n-1}{2}\Big)^{2}-\lambda_{1}} ,\frac{n+1}{2}\Big\},
\end{eqnarray}
where $\lambda_{1}$ is the greatest non-zero eigenvalue of the boundary Laplacian $\Delta_{\mathfrak{h}(0)}$. Consider the closed extension $\underline{\Delta}_{s}$ of $\Delta$ in $X_{0}^{s}=\mathcal{H}_{p}^{s,\gamma}(\mathbb{B})$ with domain 
\begin{gather}\label{DD}
\mathcal{D}(\underline{\Delta}_{s})=X_{1}^{s}=\mathcal{H}_{p}^{s+2,\gamma+2}(\mathbb{B})\oplus\mathbb{C}_{\omega}.
\end{gather}
Then, for any $\theta\in[0,\pi)$ there exists a $c>0$ such that $c-\underline{\Delta}_{s}\in\mathcal{R}(\theta)$.
\end{theorem}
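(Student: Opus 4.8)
The plan is to derive the $R$-sectoriality of $c-\underline{\Delta}_{s}$ from the resolvent theory for $\mathbb{B}$-elliptic cone differential operators: one verifies parameter-dependent ellipticity on the spectral sector, identifies the correct closed extension through the conormal symbol, checks an invertibility condition for the associated model cone operator, and finally reads off $R$-boundedness of the resolvent family from its representation inside the parameter-dependent cone calculus. Accordingly I would fix $\theta\in[0,\pi)$, treat $S_{\theta}$ as the sector for the parameter $\lambda$ (so that $\lambda\in\rho(-(c-\underline{\Delta}_{s}))$ means $c-\underline{\Delta}_{s}+\lambda$ is boundedly invertible), and organize the argument around the three symbols of $\Delta$ from Section 3.

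First I would establish parameter-ellipticity of $c-\Delta$ on $S_{\theta}$. Since $\Delta$ is the nonpositive Laplacian, its homogeneous principal symbol is $-|\xi|_{\mathfrak{g}}^{2}$, so the parameter-dependent principal and rescaled symbols $c+|\xi|_{\mathfrak{g}}^{2}+\lambda$ and $\tilde{\sigma}_{\psi}^{2}(c-\Delta)+\lambda$ are invertible for every $\lambda\in S_{\theta}$ whenever $(\xi,\lambda)\neq0$, because $S_{\theta}$ avoids the negative real axis for $\theta<\pi$ and the equation $\lambda=-|\xi|_{\mathfrak{g}}^{2}$ forces $\lambda$ onto that axis. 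This yields the interior and rescaled ellipticity with bounds uniform in $\lambda\in S_{\theta}$ and provides the interior part of a parametrix of $c-\Delta+\lambda$.

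Next I would pin down the domain and the admissible asymptotics through the conormal symbol. From
\[
\sigma_{M}(\Delta)(z)=z^{2}-(n-1)z+\Delta_{\mathfrak{h}(0)},
\]
the poles of $\sigma_{M}(\Delta)^{-1}$ are $\frac{n-1}{2}\pm\sqrt{(\frac{n-1}{2})^{2}-\lambda_{i}}$. The point of the choice \eqref{choicegg} is to place the weight line $\{\mathrm{Re}(z)=\frac{n-3}{2}-\gamma\}$ strictly between the pole $\frac{n-1}{2}-\sqrt{(\frac{n-1}{2})^{2}-\lambda_{1}}$ and the pole $z=0$ arising from the zero eigenvalue: the left inequality in \eqref{choicegg} gives $\frac{n-3}{2}-\gamma<0$, while the right one is equivalent to $\frac{n-1}{2}-\sqrt{(\frac{n-1}{2})^{2}-\lambda_{1}}<\frac{n-3}{2}-\gamma$. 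Hence $\sigma_{M}(\Delta)$ is invertible on that line, so by the description in Section 3 the minimal domain equals $\mathcal{H}_{p}^{s+2,\gamma+2}(\mathbb{B})$, and the pole $z=0$ lying in the strip $[\frac{n-3}{2}-\gamma,\frac{n+1}{2}-\gamma)$ contributes the constant functions, so that $\mathbb{C}\subset\mathcal{E}_{\Delta,\gamma}$. Consequently the domain \eqref{DD} is a genuine closed extension of $\Delta$ lying between the minimal and the maximal one, and \eqref{dmax} guarantees it is well defined.

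The hard part will be the model cone operator. I would show that on the cylinder $\mathbb{R}_{+}\times\partial\mathcal{B}$, with the domain $\mathcal{H}_{p}^{s+2,\gamma+2}\oplus\mathbb{C}$ dictated by \eqref{DD}, the operator $c-\hat{\Delta}+\lambda$ is invertible for all $\lambda\in S_{\theta}$ with $R$-bounded inverse. Exploiting that $\hat{\Delta}$ is homogeneous of degree $-2$ under the dilation group on $\mathbb{R}_{+}$, the natural route is to pass to the Mellin transform along $x$, reduce invertibility on the weight line to the already established invertibility of $\sigma_{M}(\Delta)$ off its poles coupled with $\lambda$, and combine this with the spectral data of $\Delta_{\mathfrak{h}(0)}$ to confine $\mathrm{spec}(\hat{\Delta})$ on this domain to a sector of half-angle less than $\pi-\theta$, so that a sufficiently large shift $c$ places $S_{\theta}$ in the resolvent set. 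The genuinely delicate issue is to upgrade the resulting resolvent estimates to uniform $R$-bounds and to control the interaction of the added asymptotics $\mathbb{C}$ with the continuous spectrum of the model cone; this is where the $R$-boundedness results for parameter-dependent operator families in the cone calculus, of the same Kalton--Weis type that underlies Theorem \ref{KaW}, are invoked. Finally I would patch the interior parametrix of step one with the model cone inverse inside the parameter-dependent cone calculus to obtain a parametrix of $c-\underline{\Delta}_{s}+\lambda$ modulo $R$-negligible remainders; for $c$ large and $\lambda\in S_{\theta}$ this produces the actual $R$-bounded resolvent, and, enlarging the sector slightly via Lemma \ref{secext} if needed, shows that $c-\underline{\Delta}_{s}\in\mathcal{P}(\theta)$ is $R$-sectorial of angle $\theta$, with $c$ depending on $\theta$.
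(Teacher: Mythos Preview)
Your outline is a legitimate route to the result---it is essentially the strategy behind \cite[Theorem 5.6]{RS0}, which the paper itself cites as one way to obtain the theorem: verify the parameter-dependent ellipticity conditions (interior symbol, rescaled symbol, and model cone invertibility on the chosen domain) and then read off $R$-sectoriality from the structure of the resolvent inside the parameter-dependent cone calculus. Your conormal-symbol computation correctly locates the weight line relative to the poles and explains why $\mathbb{C}$ is an admissible asymptotics subspace, so that \eqref{DD} is a genuine closed extension.

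The paper, however, gives a different proof. Rather than invoking the full cone calculus for the warped metric $\mathfrak{g}$ directly, it reduces to the straight-cone case by a localization/perturbation argument. Concretely: it introduces auxiliary metrics $\mathfrak{g}_{0}$, $\mathfrak{g}_{2}$ that are exactly conical near the tip, uses \cite{Sh} to obtain $\mathcal{BIP}$ and hence $R$-sectoriality for the corresponding Laplacians $\underline{A}_{0}$, $\underline{A}_{2}$, and then compares $\underline{\Delta}_{0}$ with these via cut-offs $\phi_{i},\psi_{i}$. The key identity is a left-parametrix formula for $(\lambda-\underline{\Delta}_{0})$ built from the resolvents $(\lambda-\underline{A}_{i})^{-1}$, with error terms given by commutators $[\underline{\Delta}_{0},\phi_{i}]$; these commutators are of lower order, so their contribution is small for large $|\lambda|$, and a Neumann series yields the exact resolvent. $R$-sectoriality is then deduced term-by-term from the $R$-sectoriality of $c-\underline{A}_{i}$ and elementary $R$-boundedness manipulations. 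The case $s>0$ is handled separately by showing the resolvent restricts, then induction on integer $s$ via commutators with first-order operators, and finally interpolation.

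What each approach buys: your route is conceptually cleaner and fits the general framework, but the step you flag as ``the hard part''---$R$-boundedness of the model-cone inverse on the domain with the added asymptotic $\mathbb{C}$---is a genuine black box requiring the full machinery of \cite{RS0} or \cite{Sh}. The paper's proof still imports that machinery, but only for the straight-cone model, and makes the passage to the warped case completely explicit; this perturbation technique is reusable in situations where one already has $R$-sectoriality for a nearby operator and wants to transfer it. One point your sketch does not address is the dependence on $s$: the cone-calculus $R$-boundedness results are typically stated for a fixed Sobolev index, and the paper's separate induction/interpolation argument for $s>0$ is not automatic from the symbolic picture alone.
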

\begin{proof}
The result can be recovered by \cite[Theorem 5.6]{RS0}. However, we outline here an alternative proof inspired by the proof of \cite[Theorem 6.1]{RS2} which we follow.

Recall that $\mathbb{B}=(\mathcal{B},\mathfrak{g})$ and consider another Riemannian metric $\mathfrak{g}_{0}$ on $\mathcal{B}$ such that $\mathfrak{g}_{0}|_{[0,\xi)\times\partial \mathcal{B}}=dx^{2}+x^{2}\mathfrak{h}(0)$ and $\mathfrak{g}_{0}=\mathfrak{g}$ on $\mathcal{B}\backslash([0,1)\times\partial \mathcal{B})$, for some fixed $\xi\in(0,1)$. Denote $\mathbb{B}_{0}=(\mathcal{B},\mathfrak{g}_{0})$ and let $\Delta_{\mathfrak{g}_{0}}$ be the associated Laplacian. Note that $\Delta$ and $\Delta_{\mathfrak{g}_{0}}$ have the same conormal symbol. Let $\underline{\Delta}_{\mathfrak{g}_{0},s}$ be the realization of $\Delta_{\mathfrak{g}_{0}}$ in $\mathcal{H}^{s,\gamma}_{p}(\mathbb{B}_{0})$ with domain $\mathcal{H}^{s+2,\gamma+2}_{p}(\mathbb{B}_{0})\oplus \mathbb{C}_{\omega}$. 

Assume first that $s=0$. Let $\underline{A}_{0}$ be the realization of $\Delta_{\mathfrak{g}_{0}}$ in $X_{0}^{0}$ with domain $X_{1}^{0}$. By the choice of the domain of $\underline{\Delta}_{\mathfrak{g}_{0},0}$, from \cite[Theorem 2.9]{RS3}, \cite[Remark 2.10]{RS3}, \cite[Theorem 5.6]{Sh} and \cite[Theorem 5.7]{Sh} we have that $\underline{\Delta}_{\mathfrak{g}_{0},0}$ satisfies the ellipticity condition (E1), (E2) and (E3) of \cite[Section 3.2]{Sh}. Hence, since $\underline{\Delta}_{\mathfrak{g}_{0},0}$ and $\underline{A}_{0}$ have the same model cone operator (see \cite[Section 2.4]{Sh} for definition), we deduce that (E1), (E2) and (E3) are also fulfilled by $\underline{A}_{0}$. Therefore, by \cite[Theorem 4.3]{Sh}, for any $\phi\in(0,\pi)$ there exists some $c>0$ such that $c-\underline{A}_{0}\in \mathcal{BIP}(\phi)$. As a consequence, since $X_{0}^{0}$ is UMD, by \cite[Theorem 4]{CP} there exists some $c>0$ such that $c-\underline{A}_{0}\in\mathcal{R}(\theta)$. 

Take a cut-off function $\widetilde{\omega}$ on $\mathcal{B}$ with values in $[0,1]$ satisfying $\widetilde{\omega}=1$ on $ [0,\sigma]\times\partial\mathcal{B}$ and $\widetilde{\omega}=0$ on $\mathcal{B}\backslash ([0,2\sigma]\times\partial\mathcal{B})$, where $\sigma\in(0,\frac{1}{2})$. Consider the operator $\underline{A}_{1}=\widetilde{\omega}\underline{\Delta}_{0}+(1-\widetilde{\omega})\underline{A}_{0}$ with domain $X_{1}^{0}$ in $X_{0}^{0}$. By taking $\sigma$ sufficiently small, we can make the $\mathcal{L}(X_{1}^{0},X_{0}^{0})$-norm of $\widetilde{\omega}(\underline{\Delta}_{0}-\underline{A}_{0})$ arbitrary small. Hence, by the relation $\underline{A}_{1}=\underline{A}_{0}+\widetilde{\omega}(\underline{\Delta}_{0}-\underline{A}_{0})$ and the perturbation result \cite[Theorem 1]{KL}, there exists some $\sigma\in(0,\frac{1}{2})$ such that $c-\underline{A}_{1}\in\mathcal{R}(\theta)$. 

Endow $\mathcal{B}$ with a metric $\mathfrak{g}_{2}$ such that $\mathfrak{g}_{2}|_{[0,\eta)\times\partial \mathcal{B}}=dx^{2}+x^{2}\mathfrak{h}(0)$ and $\mathfrak{g}_{2}=\mathfrak{g}$ on $\mathcal{B}\backslash([0,2\eta]\times\partial \mathcal{B})$, where $\eta\in(0,\frac{\sigma}{2})$. Denote by $\Delta_{\mathfrak{g}_{2}}$ the Laplacian associated with $(\mathcal{B},\mathfrak{g}_{2})$ and by $\underline{A}_{2}$ its realization in $X_{0}^{0}$ with domain $X_{1}^{0}$. Similarly to $\underline{A}_{0}$, there exists some $c>0$ such that $c-\underline{A}_{2}\in\mathcal{R}(\theta)$. 

Take a smooth function $\phi_{1}$ on $\mathcal{B}$ such that $\mathrm{supp}(\phi_{1})\subset [0,\sigma)\times\partial\mathcal{B}$ and $\phi_{1}=1$ in $[0,2\eta+\delta)\times\partial\mathcal{B}$, for some $\delta\in (0,\sigma-2\eta)$. Assume that $\phi_{1}$ depends only on the $x$ variable and let $\phi_{2}=1-\phi_{1}$. Furthermore, let $\psi_{1}$, $\psi_{2}$ be two smooth functions on $\mathcal{B}$ such that $\mathrm{supp}(\psi_{1})\subset [0,\sigma)\times\partial\mathcal{B}$, $\mathrm{supp}(\psi_{2})\subset \mathcal{B}\backslash([0,2\eta)\times\partial\mathcal{B})$, $\psi_{1}=1$ on $\mathrm{supp}(\phi_{1})$ and $\psi_{2}=1$ on $\mathrm{supp}(\phi_{2})$. Then, we can proceed similarly to \cite[(6.5)]{RS2} and \cite[(6.7)]{RS2} and construct respectively a left and right inverse of $\lambda-\underline{\Delta}_{0}$ for large $|\lambda|$ with $\lambda\in S_{\theta}$. For such $\lambda$ this construction provides the following resolvent representation, namely $(\lambda-\underline{\Delta}_{0})^{-1}=\sum_{k=0}^{\infty}(-1)^{k}Q^{k}(\lambda)R(\lambda)$, where $R(\lambda)=\sum_{i=1}^{2}\psi_{i}(\lambda-\underline{A}_{i})^{-1}\phi_{i}$ and $Q(\lambda)=\sum_{i=1}^{2}\psi_{i}(\lambda-\underline{A}_{i})^{-1}[\underline{\Delta}_{0},\phi_{i}]$, see also \cite[(6.8)]{RS2}. From the above representation, $R$-sectoriality for $c-\underline{\Delta}_{0}$ with $c>0$ sufficiently large can be shown as in \cite[(6.9)]{RS2}-\cite[(6.12)]{RS2}.

Consider now the case $s>0$. By Step 1 in the proof of \cite[Theorem 3.3]{RS0} we have that $\rho(\underline{\Delta}_{0})\subseteq\rho(\underline{\Delta}_{s})$ and that the resolvent of $\underline{\Delta}_{s}$ for $s>0$ is the restriction of the resolvent of $\underline{\Delta}_{0}$ to $X_{0}^{s}$. Then, we can first treat the case of $s\in\mathbb{N}$ by induction as in \cite[(6.13)]{RS2}. Finally, the case when $s>0$ is not an integer follows by interpolation by \cite[Lemma 3.7]{RS2}, \cite[Theorem 3.19]{KS} and \cite[Lemma 2.6]{RS2}.
\end{proof}

Next we focus on the bi-Laplacian $\underline{\Delta}_{s}^{2}$ associated with the Laplacian $\underline{\Delta}_{s}$ from \eqref{DD}. The domain of $\underline{\Delta}_{s}^{2}$, defined as usual by $\mathcal{D}(\underline{\Delta}_{s}^{2})=\{u\in\mathcal{D}(\underline{\Delta}_{s})\, |\, \underline{\Delta}_{s}u\in \mathcal{D}(\underline{\Delta}_{s})\}$, has the following form
\begin{gather}\label{DD22}
\mathcal{D}(\underline{\Delta}_{s}^{2})=\mathcal{D}(\underline{\Delta}_{s,\min}^{2})\oplus\mathbb{C}_{\omega}\oplus\mathcal{F}_{\Delta,\gamma}.
\end{gather}
Here for the minimal domain we have that $\mathcal{D}(\underline{\Delta}_{s,\min}^{2})\subset \mathcal{H}_{p}^{s+4,\gamma+4-\varepsilon}(\mathbb{B})$, for any $\varepsilon>0$. Moreover, the asymptotics space $\mathcal{F}_{\Delta,\gamma}$ is an $s$-independent finite dimensional space that consists of smooth functions on $\mathbb{B}^{\circ}$ that vanish on $\mathcal{B}\backslash([0,1)\times\partial\mathcal{B})$ and in local coordinates on the collar part $(0,1)\times\partial\mathcal{B}$ they are linear combinations of functions of the form $\omega(x)c(y)x^{-\rho}\log^{k}(x)$, $k\in\{0,1,2,3\}$, where $c\in C^{\infty}(\partial\mathbb{B})$ and the exponents $\rho$ lie in the strip $\{\lambda\in\mathbb{C}\, |\, \mathrm{Re}(\lambda)\in [\frac{n-7}{2}-\gamma,\frac{n-3}{2}-\gamma)\}$. In particular we have $\mathcal{F}_{\Delta,\gamma}\subset \mathcal{H}_{p}^{s+2,\gamma+2}(\mathbb{B})$.

We end this section by showing that the above bi-Laplacian satisfies the property of maximal $L^q$-regularity. 
\begin{lemma}\label{RD2}
Let $p\in(1,\infty)$, $s\geq0$, $\gamma$ be chosen as in \eqref{choicegg} and $\underline{\Delta}_{s}$ be the Laplacian \eqref{DD}. Then, for any $\theta\in[0,\pi)$ there exists a $c>0$ such that $\underline{\Delta}_{s}^{2}+c\in\mathcal{R}(\theta)$. 
\end{lemma}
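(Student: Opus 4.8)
The plan is to deduce the claim from the $R$-sectoriality of the (shifted) Laplacian established in Theorem \ref{RsecD}, by realizing $\underline{\Delta}_{s}^{2}+c$ as a lower order perturbation of the square of that operator. Fix $\theta\in[0,\pi)$ and choose $\theta'\in(\frac{\pi}{2},\pi)$ with $2\theta'-\pi\geq\theta$, e.g. $\theta'=\frac{\theta+\pi}{2}$. By Theorem \ref{RsecD} there is $c'>0$ such that $A:=c'-\underline{\Delta}_{s}$, with domain $\mathcal{D}(A)=\mathcal{H}_{p}^{s+2,\gamma+2}(\mathbb{B})\oplus\mathbb{C}=X_{1}^{s}$, is $R$-sectorial of angle $\theta'$ in $X_{0}^{s}=\mathcal{H}_{p}^{s,\gamma}(\mathbb{B})$; in particular $A$ is invertible and of positive type. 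Since $\mathcal{D}(A^{2})=\mathcal{D}(\underline{\Delta}_{s}^{2})$ and
\[
\underline{\Delta}_{s}^{2}+c=A^{2}-2c'A+(c'^{2}+c),
\]
it suffices to show that $A^{2}$ is $R$-sectorial of angle $\theta$ and then to absorb the first order term $-2c'A$ after a large positive shift.

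For the square I would use the factorization of the resolvent together with an angle-doubling argument. For $\lambda\in S_{\theta}\setminus\{0\}$ set $z=\sqrt{-\lambda}$ (principal branch). An elementary computation with arguments shows that both $z$ and $-z$ lie in $S_{\theta'}\setminus\{0\}$, so that $(A\pm z)^{-1}\in\mathcal{L}(X_{0}^{s})$, and since $z\cdot(-z)=\lambda$ and $(A+z)(A-z)=A^{2}+\lambda$ we obtain
\[
\lambda(A^{2}+\lambda)^{-1}=\big(z(A+z)^{-1}\big)\big((-z)(A-z)^{-1}\big).
\]
Each of the two factors belongs to the family $\{w(A+w)^{-1}\,|\,w\in S_{\theta'}\setminus\{0\}\}$, which is $R$-bounded by the $R$-sectoriality of $A$; as the product of two $R$-bounded families is $R$-bounded, the set $\{\lambda(A^{2}+\lambda)^{-1}\,|\,\lambda\in S_{\theta}\setminus\{0\}\}$ is $R$-bounded, i.e. $A^{2}$ is $R$-sectorial of angle $2\theta'-\pi$, hence of angle $\theta$. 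Adding the positive constant $c'^{2}+c$ keeps $M:=A^{2}+(c'^{2}+c)$ $R$-sectorial of angle $\theta$, since $S_{\theta}+(c'^{2}+c)\subset S_{\theta}$ and the corresponding resolvent family remains $R$-bounded.

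It then remains to treat $-2c'A$ as a perturbation of $M$. By the moment inequality for the sectorial operator $A$ (equivalently, using $A=(A^{2})^{1/2}$ and sectoriality), for every $\varepsilon>0$ there is $C_{\varepsilon}>0$ with $\|Au\|_{X_{0}^{s}}\leq\varepsilon\|A^{2}u\|_{X_{0}^{s}}+C_{\varepsilon}\|u\|_{X_{0}^{s}}$ for all $u\in\mathcal{D}(A^{2})$; combined with $\|M^{-1}\|_{\mathcal{L}(X_{0}^{s})}\leq C/c$ for large $c$, this yields $\|2c'Au\|_{X_{0}^{s}}\leq\delta(\varepsilon,c)\|Mu\|_{X_{0}^{s}}$ with $\delta(\varepsilon,c)$ arbitrarily small upon first fixing $\varepsilon$ small and then $c$ large. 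Hence $-2c'A$ is a relatively bounded perturbation of $M$ with arbitrarily small relative bound, and the perturbation result \cite[Theorem 1]{KL} gives that $\underline{\Delta}_{s}^{2}+c=M-2c'A$ is $R$-sectorial of angle $\theta$ for $c$ sufficiently large. The step requiring the most care is ensuring $R$-boundedness (rather than mere norm boundedness) in the two auxiliary arguments — the product factorization yielding $R$-sectoriality of $A^{2}$, and the smallness of $2c'A\,(M+\lambda)^{-1}$ uniformly over $\lambda\in S_{\theta}$ — which is precisely what makes \cite[Theorem 1]{KL} applicable.
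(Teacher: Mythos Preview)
Your argument is correct and follows essentially the same route as the paper: pick $\theta'=\tfrac{\theta+\pi}{2}$, use Theorem~\ref{RsecD} to get $A=c'-\underline{\Delta}_{s}$ $R$-sectorial of angle $\theta'$, deduce $R$-sectoriality of $A^{2}$ of angle $\theta$, and then treat the first-order remainder via \cite[Theorem 1]{KL} after a large shift. The only cosmetic differences are that the paper reaches $R$-sectoriality of $A^{2}$ through the partial-fraction identity $\lambda(A^{2}+\lambda)^{-1}=\tfrac{1}{2}\big(i\sqrt{\lambda}(A+i\sqrt{\lambda})^{-1}+(-i\sqrt{\lambda})(A-i\sqrt{\lambda})^{-1}\big)$ (a sum rather than your product), and obtains the smallness of the perturbation by citing \cite[Corollary 2.4]{RS2} instead of the moment inequality; both variants are equivalent in strength and difficulty.
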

\begin{proof}
By Theorem \ref{RsecD} there exists a $c_{0}$ such that $c_{0}-\underline{\Delta}_{s}\in \mathcal{R}(\frac{\pi+\theta}{2})$. Hence, for any $\lambda_{1},...,\lambda_{N}\in S_{\theta}\backslash\{0\}$ and $x_{1},...,x_{N}\in X_{0}^{s}$, $N\in\mathbb{N}\backslash\{0\}$, we have that
\begin{eqnarray}\nonumber
\lefteqn{\|\sum_{i=1}^{N}\epsilon_{i}\lambda_{i}((c_{0}-\underline{\Delta}_{s})^{2}+\lambda_{i})^{-1}x_{i}\|_{L^{q}(0,1;X_{0}^{s})}}\\\nonumber
&=&\|\sum_{i=1}^{N}\epsilon_{i}\frac{\lambda_{i}}{2i\sqrt{\lambda_{i}}}((c_{0}-\underline{\Delta}_{s}-i\sqrt{\lambda_{i}})^{-1}-(c_{0}-\underline{\Delta}_{s}+i\sqrt{\lambda_{i}})^{-1})x_{i}\|_{L^{q}(0,1;X_{0}^{s})}\\\label{aall}
&\leq&C_{1}\|\sum_{i=1}^{N}\epsilon_{i}x_{i}\|_{L^{q}(0,1;X_{0}^{s})},
\end{eqnarray}
for a suitable constant $C_{1}>0$. Therefore, $(c_{0}-\underline{\Delta}_{s})^{2}\in\mathcal{R}(\theta)$. Furthermore, for any $c>0$, by \cite[Lemma 2.6]{RS2} we also have that $(c_{0}-\underline{\Delta}_{s})^{2}+c\in\mathcal{R}(\theta)$ and its $R$-bound can be chosen uniformly bounded in $c$.

Next, we write
\begin{eqnarray*}
\lefteqn{\|(c_{0}^{2}-2c_{0}\underline{\Delta}_{s})((c_{0}-\underline{\Delta}_{s})^{2}+c)^{-1}\|_{\mathcal{L}(X_{0}^{s})}}\\
&\leq&\|(c_{0}^{2}-2c_{0}\underline{\Delta}_{s})(c_{0}-\underline{\Delta}_{s})^{-1}\|_{\mathcal{L}(X_{0}^{s})}\|(c_{0}-\underline{\Delta}_{s})((c_{0}-\underline{\Delta}_{s})^{2}+c)^{-1}\|_{\mathcal{L}(X_{0}^{s})},
\end{eqnarray*}
so that $\|(c_{0}^{2}-2c_{0}\underline{\Delta}_{s})((c_{0}-\underline{\Delta}_{s})^{2}+c)^{-1}\|_{\mathcal{L}(X_{0}^{s})}\rightarrow 0$ as $c\rightarrow \infty$ due to \cite[Corollary 2.4]{RS2} (with $A=(c_{0}-\underline{\Delta}_{s})^{2}$, $\phi=\frac{1}{2}$, $\eta\in(0,\frac{1}{2})$ and $z=c$) and \cite[Lemma 3.6]{RS3} (with $A=c_{0}-\underline{\Delta}_{s}$ and $z=\frac{1}{2}$). Hence the result follows from $\underline{\Delta}_{s}^{2}+c=(c_{0}-\underline{\Delta}_{s})^{2}+c-c_{0}^{2}+2c_{0}\underline{\Delta}_{s}$ by perturbation, see \cite[Theorem 1]{KL}.
\end{proof}

\section{The Swift-Hohenberg equation on manifolds with conical singularities}

In this section we consider the problem \eqref{CH1}-\eqref{CH2} on a conic manifold $\mathbb{B}$. We show existence and regularity results for solutions by applying the theory of Section 2 to the Laplacian \eqref{DD}. In order to treat the non-linearity, we put some further restriction on the Mellin-Sobolev space data. Recall that $\lambda_{1}$ denotes the greatest non-zero eigenvalue of the boundary Laplacian $\Delta_{\mathfrak{h}(0)}$, i.e. the Laplacian induced on $\partial\mathcal{B}$ by the metric $\mathfrak{h}(0)$ from \eqref{metric}. Let $p,q\in(1,\infty)$ such that 
\begin{gather}\label{pq}
\frac{2}{q}<-\frac{n-1}{2}+\sqrt{\Big(\frac{n-1}{2}\Big)^2-\lambda_{1}} \quad \mbox{and} \quad \frac{2}{q}+\frac{n+1}{p}<2,
\end{gather}
and the weight $\gamma$ be chosen as
\begin{eqnarray}\label{weight}
\frac{n-3}2+\frac{2}{q}<\gamma<\min\Big\{-1+\sqrt{\Big(\frac{n-1}{2}\Big)^2-\lambda_{1}} ,\frac{n+1}2\Big\}.
\end{eqnarray}
We start with the short time existence and space asymptotics result. According to the data chosen above, we denote $X_{2}^{s}=\mathcal{D}(\underline{\Delta}_{s}^{2})$ and $X_{\frac{1}{q},q}^{s}=(X_{2}^{s},X_{0}^{s})_{\frac{1}{q},q}$.

\begin{theorem}[Short time solution]\label{Tshort}
Let $s\geq0$ and $p$, $q$, $\gamma$ be chosen as in \eqref{pq}-\eqref{weight}. Then, for any $u_{0}\in (\mathcal{D}(\underline{\Delta}_{s}^{2}),\mathcal{H}_{p}^{s,\gamma}(\mathbb{B}))_{\frac{1}{q},q}$ there exists a $T>0$ and a unique 
\begin{gather}\label{maxlqrequ}
u\in W^{1,q}(0,T;\mathcal{H}_{p}^{s,\gamma}(\mathbb{B}))\cap L^{q}(0,T;\mathcal{D}(\underline{\Delta}_{s}^{2}))
\end{gather}
solving the problem \eqref{CH1}-\eqref{CH2} on $[0,T]\times\mathbb{B}$, where the bi-Laplacian domain is described in \eqref{DD22}. In addition $u$ satisfies
\begin{gather}\label{contregu33}
u\in C^{1}((0,T);\mathcal{H}_{p}^{s,\gamma}(\mathbb{B}))\cap C((0,T);\mathcal{D}(\underline{\Delta}_{s}^{2})).
\end{gather}
\end{theorem}

\begin{proof}
We will apply the theorem of Cl\'ement and Li to \eqref{CH1}-\eqref{CH2} with $A=(\underline{\Delta}_{s}+1)^{2}$, $G(t,u)=F(t,u)$ and the Banach couple $X_{2}^{s}$, $X_{0}^{s}$. By Theorem \ref{RsecD}, let $c_{0}>0$ such that $c_{0}-\underline{\Delta}_{s}\in\mathcal{P}(\theta)$, for some $\theta\in(\frac{\pi}{2},\pi)$. Denote $c_{1}=\max\{0,c_{0}^{2}\tan^{2}(\theta)-1\}$ and note that $c>c_{1}$ implies $-c_{0}\pm i\sqrt{1+c}\in S_{\theta}$. Therefore with $c>c_{1}$ sufficiently large, by the sectoriality of $c_{0}-\underline{\Delta}_{s}$, we can make the $\mathcal{L}(X_{0}^{s})$-norm of 
$$
2\underline{\Delta}_{s}(\underline{\Delta}_{s}^{2}+1+c)^{-1}=2\underline{\Delta}_{s}(c_{0}-\underline{\Delta}_{s}-c_{0}+i\sqrt{1+c})^{-1}(c_{0}-\underline{\Delta}_{s}-c_{0}-i\sqrt{1+c})^{-1}
$$
arbitrary small. Hence, by Lemma \ref{RD2}, \cite[Lemma 2.6]{RS2} and \cite[Theorem 1]{KL}, there exists some $c>0$ such that 
\begin{gather}\label{oppA}
A+c=\underline{\Delta}_{s}^{2}+2\underline{\Delta}_{s}+1+c\in \mathcal{R}(\theta).
\end{gather}
Thus, the condition (H1) of Theorem \ref{CL} is satisfied. 

Concerning the real interpolation space $(X_{2}^{s},X_{0}^{s})_{\frac{1}{q},q}$, by \cite[Lemma 5.2]{RS2} we estimate 
\begin{gather}\label{int}
(X_{0}^{s},X_{2}^{s})_{1-\frac{1}{q},q}\hookrightarrow (X_{0}^{s},\mathcal{D}(\underline{\Delta}_{s}))_{1-\frac{1}{q},q}\hookrightarrow \bigcap_{\varepsilon>0}\mathcal{H}_{p}^{s+2-\frac{2}{q}-\varepsilon,\gamma+2-\frac{2}{q}-\varepsilon}(\mathbb{B})\oplus\mathbb{C}_{\omega}.
\end{gather}
Let $U\subset X_{\frac{1}{q},q}^{s}$ be an open bounded set such that $u_{0}\in U$. Moreover, take $u_{1},u_{2}\in U$ and $t_{1},t_{2}\in[0,T_{0}]$, for certain $T_{0}>0$. Recall that 
\begin{gather}\label{F}
F(t,u)=\sum_{k=0}^{m}\alpha_{k}(t)u^{k},
\end{gather}
for some $m\in\mathbb{N}$ and a collection of locally Lipschitz on $\mathbb{R}$ complex valued functions $\{\alpha_{k}\}_{k\in\{0,...,m\}}$. If we denote by $L_{\alpha_{k}}$ the Lipschitz bound of $\alpha_{k}|_{[0,T_{0}]}$, by \cite[Corollary 3.3]{RS2} and \eqref{int} we estimate 

\begin{eqnarray}\nonumber
\lefteqn{\|F(t_{1},u_{1})-F(t_{2},u_{2})\|_{X_{0}^{s}}}\\\nonumber
&\leq&\|\sum_{k=0}^{m}\alpha_{k}(t_{1})(u_{1}^{k}-u_{2}^{k})\|_{X_{0}^{s}}+\|\sum_{k=0}^{m}(\alpha_{k}(t_{1})-\alpha_{k}(t_{2}))u_{2}^{k}\|_{X_{0}^{s}}\\\nonumber
&\leq&C_{1}(\max_{k\in\{1,...,m\}}\sup_{\tau\in[0,T_{0}]}|\alpha_{k}(\tau)|)(\sum_{k=1}^{m}\sum_{i=0}^{k-1}\|u_{1}\|_{X_{\frac{1}{q},q}^{s}}^{k-1-i}\|u_{2}\|_{X_{\frac{1}{q},q}^{s}}^{i})\|u_{1}-u_{2}\|_{X_{\frac{1}{q},q}^{s}}\\\label{lip}
&&+C_{1}|t_{1}-t_{2}|(\max_{k\in\{0,...,m\}}L_{\alpha_{k}})\sum_{k=0}^{m}\|u_{2}\|_{X_{\frac{1}{q},q}^{s}}^{k},
\end{eqnarray}
for some constant $C_{1}>0$ depending only on $s$, $p$, $q$ and $\gamma$. Therefore, the condition (H3) of Theorem \ref{CL} is also satisfied and thus we find a solution $u$ of \eqref{CH1}-\eqref{CH2} on $[0, T]\times\mathbb{B}$ satisfying \eqref{maxlqrequ}, for some $T\in (0, T_{0}]$.

Let us now verify \eqref{contregu33}. To this end consider the linear equation
\begin{eqnarray}\label{test11a}
v'(t)+((\underline{\Delta}_{s}+1)^{2}+c)v(t)&=&e^{-ct}F(t,u(t)), \quad t\in(0,T),\\\label{test22a}
v(0)&=&u_{0}.
\end{eqnarray}
Clearly, the above equation has a solution 
\begin{gather}\label{uvp}
e^{-ct}u \quad \mbox{in} \quad W^{1,q}(0,T;X_{0}^{s})\cap L^{q}(0,T;X_{2}^{s}). 
\end{gather}

By \eqref{embmaxreg}, \eqref{int} and \cite[Lemma 3.2]{RS2}, for each $k\in\mathbb{N}$ we have that 
$$
u^{k}\in \bigcap_{\varepsilon>0}C([0,T];\mathcal{H}_{p}^{s+2-\frac{2}{q}-\varepsilon,\gamma+2-\frac{2}{q}-\varepsilon}(\mathbb{B})\oplus\mathbb{C}_{\omega}).
$$
Thus, by \eqref{F} we obtain that
\begin{gather}\label{hope}
F(\cdot,u(\cdot))\in \bigcap_{\varepsilon>0}C([0,T];\mathcal{H}_{p}^{s+2-\frac{2}{q}-\varepsilon,\gamma+2-\frac{2}{q}-\varepsilon}(\mathbb{B})\oplus\mathbb{C}_{\omega}).
\end{gather} 
Hence, by extending $F(\cdot,u(\cdot))$ to $(T,\infty)$ by constant, from Proposition \ref{propmax} the problem \eqref{test11a}-\eqref{test22a} has a unique solution $v\in W^{1,q}(0,T;X_{0}^{s})\cap L^{q}(0,T;X_{2}^{s})$, which by \eqref{uvp} satisfies $v=e^{-ct}u$. 

Since $c_{0}-\underline{\Delta}_{s}$ is sectorial, by \cite[Lemma 5.2]{RS2}, \cite[(I.2.5.2)]{Am} and \cite[(I.2.9.6)]{Am} we have that 
\begin{eqnarray*}
\lefteqn{\mathcal{H}_{p}^{s+2-\frac{2}{q}-\varepsilon,\gamma+2-\frac{2}{q}-\varepsilon}(\mathbb{B})\oplus\mathbb{C}_{\omega}}\\
&&\hookrightarrow (X_{0}^{s},\mathcal{D}(c_{0}-\underline{\Delta}_{s}))_{1-\frac{1}{q}-\varepsilon,q}\hookrightarrow (X_{0}^{s},(X_{0}^{s},\mathcal{D}((c_{0}-\underline{\Delta}_{s})^{2}))_{\frac{1}{2}-\varepsilon,q} )_{1-\frac{1}{q}-\varepsilon,q} 
\end{eqnarray*}
for all $\varepsilon>0$ sufficiently small. Therefore, by \cite[(I.2.5.2)]{Am} and reiteration \cite[(I.2.8.4)]{Am} we obtain
\begin{gather}\label{int244}
\mathcal{H}_{p}^{s+2-\frac{2}{q}-\varepsilon,\gamma+2-\frac{2}{q}-\varepsilon}(\mathbb{B})\oplus\mathbb{C}_{\omega}\hookrightarrow (X_{0}^{s},X_{2}^{s})_{\frac{1}{2}-\frac{1}{2q}-4\varepsilon,q} 
\end{gather}
for all $\varepsilon>0$ small enough. By the above embedding and \eqref{hope} we find that $F(\cdot,u(\cdot))\in C([0,T];(X_{0}^{s},X_{2}^{s})_{\frac{1}{2}-\frac{1}{2q}-4\varepsilon,q})$, for all $\varepsilon>0$ sufficiently small. Then \eqref{contregu33} follows by Proposition \ref{propmax} applied to \eqref{test11a}-\eqref{test22a}.
\end{proof}

Space asymptotics for the solution $u$ of Theorem \ref{Tshort} close to the singularity are provided by \eqref{DD22}, where the interplay with the local geometry can be seen. In addition, we have the following.

\begin{proposition}[Local asymptotics]\label{lpo}
Let $s\geq0$ and $p$, $q$, $\gamma$ be chosen as in \eqref{pq}-\eqref{weight}. Then, for the unique solution $u$ of Theorem \ref{Tshort} we have that 
\begin{gather}\label{contint}
u\in C([0,T];(X_{2}^{s},X_{0}^{s})_{\frac{1}{q},q})\hookrightarrow \bigcap_{\varepsilon>0} C([0,T];\mathcal{H}_{p}^{s+2-\frac{2}{q}-\varepsilon,\gamma+2-\frac{2}{q}-\varepsilon}(\mathbb{B})\oplus\mathbb{C}_{\omega})\hookrightarrow C([0,T];C(\mathbb{B})).
\end{gather}
If in addition $q>2$, then 
$$
(X_{2}^{s},X_{0}^{s})_{\frac{1}{q},q}\hookrightarrow\mathcal{H}_{p}^{s+2,\gamma+2}(\mathbb{B})\oplus\mathbb{C}_{\omega}\hookrightarrow C(\mathbb{B}).
$$
In both cases, the asymptotic behavior of the solution close to the singularity can be seen by \cite[Lemma 3.2]{RS2} and \eqref{weight}.
\end{proposition}
\begin{proof}
The first embedding follows by \eqref{embmaxreg} combined with \eqref{int} and \cite[Lemma 3.2]{RS2}. For the second embedding, let $c_{0}>0$ such that by Theorem \ref{RsecD} and Lemma \ref{RD2} both $c_{0}-\underline{\Delta}_{s}$ and $\underline{\Delta}_{s}^{2}+c_{0}$ are sectorial. We have that $X_{2}^{s}=\mathcal{D}((c_{0}-\underline{\Delta}_{s})^{2})$ with equivalence to the respective norms. Therefore, by \cite[(I.2.5.2)]{Am} and \cite[(I.2.9.6)]{Am} we obtain 
$$
(X_{2}^{s},X_{0}^{s})_{\frac{1}{q},q}=(X_{0}^{s},X_{2}^{s})_{1-\frac{1}{q},q}\hookrightarrow (X_{0}^{s},\mathcal{D}((c_{0}-\underline{\Delta}_{s})^{2}))_{1-\frac{1}{q},q}\hookrightarrow \mathcal{D}(c_{0}-\underline{\Delta}_{s}).
$$
\end{proof}

\begin{remark}
If the metric $\mathfrak{h}(x)$ is independent of $x$ when $x$ is close to zero, then by \cite[Theorem 3.3]{Ro2} we have more information concerning the contribution of the asymptotics space $\mathcal{F}_{\Delta,\gamma}$ defined in \eqref{DD22} to the interpolation space $(X_{2}^{s},X_{0}^{s})_{\frac{1}{q},q}$. In this case \eqref{contint} provides sharper asymptotics.
\end{remark}

Next we proceed to the main result of this section by showing that under a suitable condition, the solution given by Theorem \ref{Tshort} exists for all times. We follow a direct proof by estimating all the parameters determining the Banach fixed point step in Theorem \ref{CL}. For an alternative approach by using the maximal $L^q$-regularity property we refer to \cite[Corollary 5.1.2]{PrS}. 

\begin{theorem}[Long time solution]\label{longt}
Let $s\geq0$, $p$, $q$, $\gamma$ be chosen as in \eqref{pq}-\eqref{weight}, $u_{0}\in (\mathcal{D}(\underline{\Delta}_{s}^{2}),\mathcal{H}_{p}^{s,\gamma}(\mathbb{B}))_{\frac{1}{q},q}$
with $\mathcal{D}(\underline{\Delta}_{s}^{2})$ described in \eqref{DD22}, and $u$, $T$ be given by Theorem \ref{Tshort}. Consider the following condition:
\begin{gather}\label{kappa}
\bigg\{\begin{array}{l}
\text{There exists a positive function $K(\cdot)\in C(\mathbb{R})$ such that} \\
 \text{$\|F(\cdot,u(\cdot))\|_{L^{q}(0,T;\mathcal{H}_{p}^{s,\gamma}(\mathbb{B}))}\leq K(T)$ for all $T$ as above}.
 \end{array}
\end{gather}
Then, $T$ can be taken arbitrary large if and only if \eqref{kappa} holds. 
\end{theorem}
\begin{proof}
From the proof of Theorem \ref{Tshort} we recall \eqref{oppA} and the fact that \eqref{test11a}-\eqref{test22a} has a unique solution $v=e^{-ct}u$ in $W^{1,q}(0,T;X_{0}^{s})\cap L^{q}(0,T;X_{2}^{s})$. 

Assume that \eqref{kappa} holds. By extending $F(\cdot,u(\cdot))$ to $(T,\infty)$ by constant, from \eqref{maxreg} and \eqref{hope} applied to \eqref{test11a}-\eqref{test22a} we estimate 
\begin{eqnarray}\nonumber
\lefteqn{\|u\|_{W^{1,q}(0,T;X_{0}^{s})}+\|u\|_{L^{q}(0,T;X_{2}^{s})}}\\\nonumber
&\leq&e^{cT}(\|v\|_{W^{1,q}(0,T;X_{0}^{s})}+\|v\|_{L^{q}(0,T;X_{2}^{s})})\\\nonumber
&\leq& C_{1}e^{cT}(\|e^{-ct}F(t,u(t))\|_{L^{q}(0,T;X_{0}^{s})}+\|u_{0}\|_{(X_{2}^{s},X_{0}^{s})_{\frac{1}{q},q}})\\\label{key}
&\leq& C_{1}e^{cT}(K(T)+\|u_{0}\|_{(X_{2}^{s},X_{0}^{s})_{\frac{1}{q},q}}),
\end{eqnarray}
for some positive constant $C_{1}$ independent of $T$. Thus, due to the embedding \eqref{embmaxreg}, if $T>\delta$, for some fixed $\delta>0$, then there exists a positive function $K_{1}(\cdot)\in C(\mathbb{R})$ such that 
\begin{gather}\label{intbound}
\|u\|_{C([0,T];(X_{2}^{s},X_{0}^{s})_{\frac{1}{q},q})}\leq K_{1}(T).
\end{gather}

By \eqref{int}, \eqref{F}, \eqref{int244}, \eqref{contint} and \eqref{intbound}, together with the Banach algebra property of the space $\mathcal{H}_{p}^{s+2-\frac{2}{q}-\varepsilon,\gamma+2-\frac{2}{q}-\varepsilon}(\mathbb{B})\oplus\mathbb{C}_{\omega}$ for $\varepsilon>0$ small enough provided by \cite[Lemma 3.2]{RS2}, we obtain for such $\varepsilon>0$ that $F(\cdot,u(\cdot))\in L^{\infty}(0,T;(X_{0}^{s},X_{2}^{s})_{\frac{1}{2}-\frac{1}{2q}-\varepsilon,q})$ and furthermore
\begin{gather}\label{ppos}
\|F(\cdot,u(\cdot))\|_{L^{\infty}(0,T;(X_{0}^{s},X_{2}^{s})_{\frac{1}{2}-\frac{1}{2q}-\varepsilon,q})}\leq K_{2}(T),
\end{gather}
for some positive function $K_{2}(\cdot)\in C(\mathbb{R})$. Therefore, Proposition \ref{propmax} applied to \eqref{test11a}-\eqref{test22a} implies 
\begin{gather}\label{nn}
u(t)\in X_{2}^{s} \quad \text{for each}\quad t\in[\delta,T] \quad \text{and} \quad \|u(t)\|_{X_{2}^{s}}\leq K_{3}(T), \quad t\in[\delta,T],
\end{gather} 
for some positive function $K_{3}(\cdot)\in C(\mathbb{R})$ independent of $t$.

Let $\tau\in[\delta,T)$ and consider the following linear equation,
\begin{eqnarray}\label{test15}
h'(t)+(\underline{\Delta}_{s}+1)^{2}h(t)&=&F(t,u(\tau)),\quad t>0,\\\label{test65}
h(0)&=&u(\tau).
\end{eqnarray}
By letting $h(t)=e^{ct}g(t)$, we obtain 
\begin{eqnarray}\label{test1}
g'(t)+((\underline{\Delta}_{s}+1)^{2}+c)g(t)&=&e^{-ct}F(t,u(\tau)),\quad t>0,\\\label{test7}
g(0)&=&u(\tau).
\end{eqnarray} 
By \cite[Lemma 3.2]{RS2} and \eqref{int}, for each $T_{1}>0$ we have that 
$$
F(\cdot,u(\tau))\in \bigcap_{\varepsilon>0}C([0,T_{1}];\mathcal{H}_{p}^{s+2-\frac{2}{q}-\varepsilon,\gamma+2-\frac{2}{q}-\varepsilon}(\mathbb{B})\oplus\mathbb{C}_{\omega})\hookrightarrow L^{q}(0,T_{1};X_{0}^{s}).
$$
Hence, by Proposition \ref{propmax}, there exists a unique $g\in W^{1,q}(0,T_{1};X_{0}^{s})\cap L^{q}(0,T_{1};X_{2}^{s})$ solving \eqref{test1}-\eqref{test7}. Moreover, the function $w(t)=g(t)-e^{-tA}u(\tau)$ satisfies 
\begin{eqnarray*}
w'(t)+((\underline{\Delta}_{s}+1)^{2}+c)w(t)&=&e^{-ct}F(t,u(\tau)),\quad t>0,\\
w(0)&=&0,
\end{eqnarray*}
where $A$ is defined in \eqref{oppA}.

Fix $T_{2}>0$ and assume that $T_{1}\in(0,T_{2}]$. By \eqref{embmaxreg}, \eqref{maxreg}, \eqref{int} and \eqref{F} for any $\varepsilon>0$ small enough we have that 
\begin{eqnarray}\nonumber
\lefteqn{\|w\|_{C([0,T_{1}];(X_{2}^{s},X_{0}^{s})_{\frac{1}{q},q})}}\\\nonumber
&\leq &C_{2}\|w\|_{W^{1,q}(0,T_{1};X_{0}^{s})\cap L^{q}(0,T_{1};X_{2}^{s})}\leq C_{3}\|e^{-c(\cdot)}F(\cdot,u(\tau))\|_{L^{q}(0,T_{1};X_{0}^{s})}\\\label{jjaa}
&\leq &C_{4}T_{1}^{\frac{1}{q}}(\max_{k\in\{0,...,m\}}\sup_{t\in[0,T_{1}]}|\alpha_{k}(t)|)\sum_{k=0}^{m}\|u(\tau)\|_{(X_{2}^{s},X_{0}^{s})_{\frac{1}{q},q}}^{k},
\end{eqnarray}
for some positive constants $C_{2}$, $C_{3}$ and $C_{4}$ independent of $\tau$, $T$ and $T_{1}$, where we have used the Banach algebra property of $\mathcal{H}_{p}^{s+2-\frac{2}{q}-\varepsilon,\gamma+2-\frac{2}{q}-\varepsilon}(\mathbb{B})\oplus\mathbb{C}_{\omega}$ due to \cite[Lemma 3.2]{RS2}. 

By \eqref{nn}, the function $t\mapsto f(t)=u(\tau)-e^{-tA}u(\tau)$ satisfies $f'(t)+((\underline{\Delta}_{s}+1)^{2}+c)f(t)=((\underline{\Delta}_{s}+1)^{2}+c)u(\tau)$, $t>0$, and $f(0)=0$. Hence, by \eqref{embmaxreg}, \eqref{maxreg} and \eqref{nn}, we obtain that 
\begin{gather}\label{pppl}
\|(e^{-(\cdot)A}-I)u(\tau)\|_{C([0,T_{1}];(X_{2}^{s},X_{0}^{s})_{\frac{1}{q},q})}\leq K_{4}(T)T_{1}^{\frac{1}{q}},
\end{gather}
for some positive function $K_{4}(\cdot)\in C(\mathbb{R})$ independent of $\tau$ and $T_{1}$. 

Returning to the solution of \eqref{test15}-\eqref{test65} we estimate
\begin{eqnarray}\nonumber
\lefteqn{\phi(T_{1})=\|h(\cdot)-u(\tau)\|_{C([0,T_{1}];(X_{2}^{s},X_{0}^{s})_{\frac{1}{q},q})}}\\\nonumber
&\leq&e^{cT_{1}}\|g(\cdot)-e^{-c(\cdot)}u(\tau)\|_{C([0,T_{1}];(X_{2}^{s},X_{0}^{s})_{\frac{1}{q},q})}\\\nonumber
&\leq&e^{cT_{1}}\|w\|_{C([0,T_{1}];(X_{2}^{s},X_{0}^{s})_{\frac{1}{q},q})}+e^{cT_{1}}\|(e^{-(\cdot)A}-I)u(\tau)\|_{C([0,T_{1}];(X_{2}^{s},X_{0}^{s})_{\frac{1}{q},q})}\\\nonumber
&&+(e^{cT_{1}}-1)\|u(\tau)\|_{(X_{2}^{s},X_{0}^{s})_{\frac{1}{q},q}}.
\end{eqnarray}
Therefore, by \eqref{intbound}, \eqref{jjaa} and \eqref{pppl} we find that $\phi(T_{1})\rightarrow0$ as $T_{1}\rightarrow0$ uniformly in $\tau\in[\delta,T)$ and $T$ lying in a bounded set. 

We further estimate
\begin{eqnarray}\nonumber
\lefteqn{\psi(T_{1})=\|h\|_{W^{1,q}(0,T_{1};X_{0}^{s})\cap L^{q}(0,T_{1};X_{2}^{s})}}\\\nonumber
&\leq&C_{5}e^{cT_{1}}(\|w\|_{W^{1,q}(0,T_{1};X_{0}^{s})\cap L^{q}(0,T_{1};X_{2}^{s})}\\\nonumber
&&+\|e^{-(\cdot)A}u(\tau)\|_{L^{q}(0,T_{1};X_{0}^{s})}+2\|e^{-(\cdot)A}Au(\tau)\|_{L^{q}(0,T_{1};X_{0}^{s})})\\\nonumber
&\leq&C_{5}e^{cT_{1}}(\|w\|_{W^{1,q}(0,T_{1};X_{0}^{s})\cap L^{q}(0,T_{1};X_{2}^{s})}\\\label{ooppl}
&&+\|e^{-(\cdot)A}\|_{L^{q}(0,T_{1};\mathcal{L}(X_{0}^{s}))}(\|u(\tau)\|_{X_{0}^{s}}+2\|Au(\tau)\|_{X_{0}^{s}})),
\end{eqnarray}
for some constant $C_{5}>0$ independent of $\tau$, $T$ and $T_{1}$. Therefore, by \eqref{intbound}, \eqref{nn}, \eqref{jjaa} and Lemma \ref{estimate1} we also find that $\psi(T_{1})\rightarrow0$ as $T_{1}\rightarrow0$ uniformly in $\tau\in[\delta,T)$ and $T$ lying in a bounded set.

Fix $r_{0}>0$ and assume that $r\in(0,r_{0}]$. Denote by $\Sigma_{r,T_{1}}$ the set of functions $a\in W^{1,q}(0,T_{1};X_{0}^{s})\cap L^{q}(0,T_{1};X_{2}^{s})$ satisfying $a(0)=u(\tau)$ and $\|a-h\|_{W^{1,q}(0,T_{1};X_{0}^{s})\cap L^{q}(0,T_{1};X_{2}^{s})}\leq r$. Furthermore, let the map $\gamma_{r,T_{1}}:\Sigma_{r,T_{1}}\rightarrow \Sigma_{r,T_{1}}$ defined by $\gamma_{r,T_{1}}(a)=b$, where $b$ is the unique in $W^{1,q}(0,T_{1};X_{0}^{s})\cap L^{q}(0,T_{1};X_{2}^{s})$ solution of 
\begin{eqnarray*}
b'(t)+(\underline{\Delta}_{s}+1)^{2}b(t)&=&F(t,a(t)), \quad t\in(0,T_{1}),\\
b(0)&=&u(\tau).
\end{eqnarray*}

Note that the existence of $b$ follows since $F(\cdot,a(\cdot))\in L^{q}(0,T_{1};X_{0}^{s})$ due to \eqref{embmaxreg}, \cite[Lemma 3.2]{RS2} and \eqref{contint}. The proof of Theorem \ref{CL} is based on the Banach fixed point theorem applied to $\gamma_{r,T_{1}}$. Namely, it is shown that for sufficiently small $r$ and $T_{1}$, $\gamma_{r,T_{1}}$ becomes a well defined map which is also a contraction. The constant bounds that determine the above two properties of $\gamma_{r,T_{1}}$ are estimated in \cite[(2.11)]{CL} and \cite[(2.14)]{CL} and are given by \cite[(2.13)]{CL} and \cite[(2.17)]{CL} respectively; note that in our case \cite[(2.13)]{CL} and \cite[(2.17)]{CL} are simplified due to the semilinearity. In general, they are determined by the following five parameters, namely: (i) the maximal $L^{q}$-regularity bound $M$ from \cite[Corollary 2.3]{CL}, (ii) the Lipschitz bound $L$ in \cite[(2.9)-(2.10)]{CL}, (iii) the initial data $u(0)$ in $(X_{2}^{s},X_{0}^{s})_{\frac{1}{q},q}$, (iv) the decay of $\phi(z)$ as $z\rightarrow0$ and (v) the decay of $\psi(z)$ as $z\rightarrow0$.

Take $u(\tau)$ as initial data for \eqref{CH1}-\eqref{CH2} with $\tau$ sufficiently close to $T$. Denote by $T_{\max}$ the supremum of all possible $T$ and assume that $T_{\max}<\infty$. The bound $M$ in our case is determined by \eqref{maxreg2} applied to the operator $(\underline{\Delta}_{s}+1)^{2}$, as well as by the norm of the embedding \eqref{embmaxreg}, and hence stays uniformly bounded in $\tau$, $T$ and $T_{1}$. The constant $\|u(\tau)\|_{(X_{2}^{s},X_{0}^{s})_{\frac{1}{q},q}}$ also stays uniformly bounded in $\tau$, $T$ and $T_{1}$ due to \eqref{intbound}. Furthermore, by \eqref{lip}, the Lipschitz constant $L$ is determined by the $(X_{2}^{s},X_{0}^{s})_{\frac{1}{q},q}$-norm of the solution $u$, and hence it is uniformly bounded in $\tau$, $T$ and $T_{1}$ as well. Next, the right hand side of \cite[(2.8)]{CL} becomes arbitrary small uniformly in $\tau$, $T$ and $T_{1}$, by taking $r_{0}$ and $T_{2}$ sufficiently small. Finally, both $\phi(T_{1})$ and $\psi(T_{1})$ become arbitrary small uniformly in $\tau$ and $T$, by taking the new time step $T_{1}$ sufficiently small. 

Therefore, by taking $T$ sufficiently close to $T_{\max}$, we can repeat the Cl\'ement-Li Banach fixed point step of Theorem \ref{Tshort} in order to obtain a $W^{1,q}(T_{\max}-\rho,T_{\max}+\rho;X_{0}^{s})\cap L^{q}(T_{\max}-\rho,T_{\max}+\rho;X_{2}^{s})$ solution of \eqref{CH1}-\eqref{CH2}, for some $\rho\in(0,T_{\max})$. The restriction of such a solution is also unique in $W^{1,q}(T_{\max}-\rho,T_{\max}-\eta;X_{0}^{s})\cap L^{q}(T_{\max}-\rho,T_{\max}-\eta;X_{2}^{s})$ for each $\eta\in(0,\rho)$, and therefore it coincides with $u$ in $(T_{\max}-\rho,T_{\max})$, which contradicts the maximality of $T_{\max}$.

Now if $T$ in Theorem \ref{Tshort} can be taken arbitrary large, then \eqref{kappa} holds due to \eqref{hope}.
\end{proof}

\section{The Swift-Hohenberg equation on closed manifolds} 

In this section we study the Swift-Hohenberg equation on closed manifolds by employing the singular analysis results described in the previous sections. The main idea of this approach is based on geodesic polar coordinates. Let $\mathcal{M}$ be a smooth closed connected $(n+1)$-dimensional manifold, endowed with a Riemannian metric $\mathfrak{f}$. Denote $\mathbb{M}=(\mathcal{M},\mathfrak{f})$, $\mathbb{S}^{n}=\{z\in\mathbb{R}^{n+1}\, |\, |z|=1\}$ the unit sphere, and let $o\in \mathcal{M}$ be an arbitrary point. If $z\in \mathcal{M}\backslash\{o\}$, let $x=d(o,z)$ be the geodesic distance between $o$ and $z$, where $d$ is the metric distance induced by $\mathfrak{f}$. Then, there exists an $r>0$ such that $(x,y)\in (0,r)\times \mathbb{S}^{n}$ are local coordinates around $o$ and the metric in these coordinates admits the structure $\mathfrak{f}=dx^{2}+x^{2}\mathfrak{f}_{\mathbb{S}^{n}}(x)$, where $x\mapsto \mathfrak{f}_{\mathbb{S}^{n}}(x)$ is a smooth family of Riemannian metrics on $\mathbb{S}^{n}$, see, e.g., \cite[Lemma 5.5.7]{Pa}. Assume that $x\mapsto \mathfrak{f}_{\mathbb{S}^{n}}(x)$ is smooth up to $x=0$.

According to the above setting we consider the problem \eqref{CH1}-\eqref{CH2} on the conic manifold $\mathbb{B}=((\mathcal{M}\backslash\{o\})\cup(\{0\}\times \mathbb{S}^{n}),\mathfrak{f})$. Denote by $u$ the unique short (respectively long) time solution on $[0,T]\times\mathbb{B}$, $T>0$, subject to appropriate initial data $u_{0}$, given by Theorem \ref{Tshort} (respectively Theorem \ref{longt}). By the embedding \eqref{embmaxreg}, \cite[Lemma 3.2]{RS2} and Proposition \ref{lpo} we obtain the following.

\begin{corollary}\label{corpatr}
The above solution satisfies $u\in C([0,T];C(\mathbb{M}))$ and moreover there exist some $\alpha\in C([0,T];\mathbb{C})$ and $C>0$ that only depend on $o$, $u_{0}$ and $o$, $u_{0}$, $n$, $T$, $r$ respectively, such that
\begin{gather}\label{betadec}
|u(x,y,t)-\alpha(t)|\leq C x^{\beta}, \quad x\in[0,r), \, y\in \mathbb{S}^{n}, \, t\in[0,T],
\end{gather}
for some $\beta>0$ that only depends on $u_{0}$, $n$, and $\lambda_{1}$, where $\lambda_{1}$ is the greatest non-zero eigenvalue of the Laplacian on $\mathbb{S}^{n}$ induced by the metric $\mathfrak{f}_{\mathbb{S}^{n}}(0)$.
\end{corollary}

For the dependence of $\lambda_{1}$ on the local geometry we refer to \cite{LY}, \cite{LY2} and \cite{Li}. Moreover, $\beta$ can be chosen arbitrary close to $2$ provided that $-\lambda_{1}$ is large enough, as it can be seen by the following. 

\begin{example}\label{ex123}
Assume that in Corollary \ref{corpatr} the initial data $u_{0}$ and the greatest non-zero eigenvalue $\lambda_{1}$ satisfy $u_{0}={\text constant}$ on $\mathcal{M}$ and $-\lambda_{1}\geq 2(n+1)$ respectively. Then, for any $\beta<2$ there exists a $C>0$ such that \eqref{betadec} holds.
\end{example}
\begin{proof}
For any $\varepsilon\in(0,1)$ in Theorem \ref{Tshort} (respectively Theorem \ref{longt}) we can choose $s\geq0$ arbitrary, $\gamma=\frac{n+1}{2}-\varepsilon$, $p=\frac{n+1}{2-\varepsilon}$ and $q>\frac{2}{\varepsilon}$ arbitrary. 
\end{proof}

\section{Appendix}

We include here the proof of Proposition \ref{propmax}. We start with the following elementary boundedness property of holomorphic semigroups, that can be found in \cite[Section IV.2.1]{Am}.

\begin{lemma}\label{estimate1}
Let $A\in \mathcal{P}(K,\theta)$ in $X_{0}$ for certain $K\geq1$ and $\theta>\frac{\pi}{2}$. Then, $A$ generates a bounded holomorphic semigroup $\{e^{-zA}\}_{z\in S_{\phi}}$ on $X_{0}$, where $\phi\in[0,\theta-\frac{\pi}{2})$. Furthermore, for any $a\in[0,1)$ we have that $t\mapsto e^{-tA} \in C((0,\infty);\mathcal{L}(X_{0},\mathcal{D}(A^{a}))$ and 
$$
\|A^{a}e^{-tA}\|_{\mathcal{L}(X_{0})}\leq C t^{-a}, \quad t>0,
$$
for some constant $C>0$ that only depends on $K$, $\theta$, and $a$.
\end{lemma}

\subsubsection*{Proof of Proposition 2.7}
Consider the following two linear parabolic problems
\begin{eqnarray}\label{APF1}
u_{1}'(t)+Au_{1}(t)&=&g(t), \quad t\in(0,T),\\\label{abp1}
u_{1}(0)&=&0 
\end{eqnarray}
and
\begin{eqnarray}\label{APF2}
u_{2}'(t)+Au_{2}(t)&=&0, \quad t\in(0,T),\\\label{abp2}
u_{2}(0)&=&u_{0}.
\end{eqnarray}

For any $T>0$, by Theorem \ref{KaW}, the problem \eqref{APF1}-\eqref{abp1} admits a unique solution $u_{1}\in W^{1,q}(0,T;X_{0})\cap L^{q}(0,T;X_{1})$. More precisely, let us denote again by $A$ the natural extension of $A$ from $X_{0}$ to $L^{q}(0,T;X_{0})$, i.e. $(Af)(t)=Af(t)$ for almost all $t$. By Kahane's inequality, $A$ with domain $L^{q}(0,T;X_{1})$ in $L^{q}(0,T;X_{0})$ is also $R$-sectorial of angle $\theta$. 

Let $B:u\mapsto \partial_{t}u$ with domain $\mathcal{D}(B)=\{u\in W^{1,q}(0,T;X_{0})\, |\, u(0)=0\}$ in $L^{q}(0,T;X_{0})$. Recall that $\sigma(B)=\emptyset$ and for each $\lambda\in\mathbb{C}$
\begin{gather}\label{resB}
(B+\lambda)^{-1}g(t)=\int_{0}^{t}e^{(s-t)\lambda} g(s)ds, \quad g\in L^{q}(0,T;X_{0}).
\end{gather}
Since our underline Banach space fulfills the UMD property, by \cite[Theorem 8.5.8]{Ha}, for any $\phi\in(0,\frac{\pi}{2})$ the operator $B$ has bounded $H^\infty$-calculus of angle $\phi$ (see, e.g., \cite[Definition 3.3.12]{PrS} for the definition of this property). Furthermore, if in the definition formula for the $H^\infty$-calculus for $B$ (see, e.g., \cite[(3.29)]{PrS}) we replace $(B+\lambda)^{-1}g(t)$ by $\int_{0}^{t}e^{(s-t)\lambda} g(s)\psi_{[0,T]}(s)ds$, where $\psi_{[0,T]}$ denotes the characteristic function on $[0,T]$, then by \cite[Corollary 8.5.3 (a)]{Ha} we find that the $H^\infty$-bound of $B$ (i.e. the constant bound in \cite[(3.60)]{PrS}) for arbitrary $T$, is smaller than the $H^\infty$-bound of $B$ for the case of $T=\infty$. Therefore, we conclude that the $H^\infty$-bound of $B$ is uniformly bounded in $T\in(0,\infty)$.

The operators $A$ and $B$ are resolvent commuting in the sense of \cite[(III.4.9.1)]{Am}. Hence, by \cite[Theorem 6.3]{KW1} the sum $A+B$ with domain $W^{1,q}(0,T;X_{0})\cap L^{q}(0,T;X_{1})$ in $L^{q}(0,T;X_{0})$ is closed and invertible. By \cite[Theorem 3.7]{DG} (or alternatively by \cite[Theorem 2.1]{Ro}) its inverse is given by 
\begin{gather}\label{a+b}
(A+B)^{-1}=\frac{1}{2\pi i}\int_{\Gamma_{\theta}}(A+\lambda)^{-1}(B-\lambda)^{-1}d\lambda \in \mathcal{L}(X_{0}).
\end{gather}
Furthermore, by the estimates in the proof of \cite[Theorem 4.4]{KW1} (alternatively see \cite[Theorem 2.1]{HH} or the proof of \cite[Theorem 3.1]{PS} or \cite[Lemma 3.4]{Ro1}), the $\mathcal{L}(L^{q}(0,T;X_{0}))$-norm of $B(A+B)^{-1}$ can be estimated by the $H^\infty$-bound of $B$ and the $R$-bound of $A$, and hence it is uniformly bounded in $T$. Therefore we have
\begin{eqnarray}\label{est1}
\|u_{1}\|_{W^{1,q}(0,T;X_{0})}+\|u_{1}\|_{ L^{q}(0,T;X_{1})}\leq C_{1}\|g\|_{L^{q}(0,T;X_{0})},
\end{eqnarray}
for some constant $C_{1}>0$ that only depends on $A$, $R$, $\theta$, and $q$.

Concerning the problem \eqref{APF2}, by characterization \cite[Theorem 3.7.11]{ABHN}, $-A$ generates a bounded holomorphic semigroup on $X_{0}$ which on the positive real semiaxis is given by the formula \cite[(3.46)]{ABHN}. Since the integral $\int_{\Gamma_{\theta}}e^{\lambda t}A(A+\lambda)^{-1}d\lambda$, $t>0$, converges absolutely, by \cite[(3.46)]{ABHN} we have that $e^{-tA}$, $t>0$, maps $X_{0}$ to $\mathcal{D}(A)$ and moreover by the dominated convergence theorem
\begin{gather}\label{rgt}
Ae^{-tA}=\frac{1}{2\pi i }\int_{\Gamma_{\theta}}e^{\lambda t}A(A+\lambda)^{-1}d\lambda \in C((0,\infty);\mathcal{L}(X_{0})).
\end{gather}

Therefore, if $w\in (X_{1},X_{0})_{\frac{1}{q},q}$ and $\sigma=\frac{q-1}{2q}$, by \cite[(I.2.5.2)]{Am}, \cite[(I.2.9.6)]{Am} and \cite[Theorem III.4.6.5]{Am} we have that
$Ae^{-tA}w=A^{1-\sigma}e^{-tA}A^{\sigma}w$. Thus, for any $\tau>1$ by Lemma \ref{estimate1} we estimate
\begin{eqnarray}\nonumber
\lefteqn{\|Ae^{-(\cdot)A}w\|_{L^{q}(1,\tau;X_{0})}}\\\label{ool}
&&\leq\|A^{1-\sigma}e^{-(\cdot)A}\|_{L^{q}(1,\tau;\mathcal{L}(X_{0}))}\|A^{\sigma}w\|_{X_{0}}\leq C_{2}(1-\tau^{\frac{1-q}{2}})^{\frac{1}{q}}\|w\|_{(X_{1},X_{0})_{\frac{1}{q},q}},
\end{eqnarray}
with some constant $C_{2}>0$ that only depends on $A$, $K$, $\theta$, and $q$.

Let the space $D_{A}(1-\frac{1}{q},q)=\{w\in X_{0}\, |\, t\mapsto Ae^{-tA}w\in L^{q}(0,1;X_{0})\}$
endowed with the norm $\|w\|_{D_{A}(1-\frac{1}{q},q)}=\|w\|_{X_{0}}+\|Ae^{-(\cdot)A}w\|_{ L^{q}(0,1;X_{0})}$. By the characterization result \cite[Proposition 2.2.2]{Lu}, we have that $D_{A}(1-\frac{1}{q},q)=(X_{1},X_{0})_{\frac{1}{q},q}$ up to norm equivalence. Hence, there exists some constant $C_{3}>0$ depending only on $A$ and $q$ such that
\begin{gather}\label{estint}
\|Ae^{-(\cdot)A}w\|_{ L^{q}(0,1;X_{0})}\leq C_{3}\|w\|_{(X_{1},X_{0})_{\frac{1}{q},q}}
\end{gather}
for any $w\in (X_{1},X_{0})_{\frac{1}{q},q}$. 

By \eqref{ool}-\eqref{estint} we conclude that $t\mapsto u_{2}(t)=e^{-tA}u_{0}$ is a $W^{1,q}(0,T;X_{0})\cap L^{q}(0,T;X_{1})$-solution of \eqref{APF2}-\eqref{abp2} and moreover
\begin{gather}\label{est2}
\|u_{2}\|_{W^{1,q}(0,T;X_{0})}+\|u_{2}\|_{ L^{q}(0,T;X_{1})}\leq C_{4}\|u_{0}\|_{(X_{1},X_{0})_{\frac{1}{q},q}},
\end{gather}
for some constant $C_{4}>0$ that only depends on $A$, $K$, $\theta$, and $q$.

Returning to the original problem, note that $u=u_{1}+u_{2}$ is a $W^{1,q}(0,T;X_{0})\cap L^{q}(0,T;X_{1})$-solution of \eqref{APF}-\eqref{thr} and the required estimate follows by \eqref{est1} and \eqref{est2} together with the fact that $2\max\{R\, , \, \sup_{\lambda\in S_{\theta}, |\lambda|\leq1}\|(A+\lambda)^{-1}\|_{\mathcal{L}(X_{0})}\}$ is a sectorial bound of $A$. The uniqueness of $u$ follows by the injectivity of $A+B$.

Concerning the estimate \eqref{x1bound}, if $\alpha\in(0,1)$, by \eqref{a+b}, \cite[(III.4.7.13)]{Am}, Cauchy's theorem and Fubini's theorem we find that
\begin{eqnarray}\label{oopp}
(A+B)^{-1}A^{-\alpha}=\frac{1}{2\pi i}\int_{\Gamma_{\theta}}(-\lambda)^{-\alpha}(A+\lambda)^{-1}(B-\lambda)^{-1}d\lambda,
\end{eqnarray}
see also \cite[(2.3)]{Ro}. Moreover, if $g\in \cap_{\tau>0}L^{\infty}(0,\tau;(X_{0},X_{1})_{\phi,\eta})$ for some $\phi\in(0,1)$ and $\eta\in(1,\infty)$, then by restricting $a\in(0,\phi)$ and taking into account \cite[(I.2.5.2)]{Am} and \cite[(I.2.9.6)]{Am} we have that $g\in \cap_{\tau>0}L^{\infty}(0,\tau;\mathcal{D}(A^{\alpha}))$. Therefore, by writing $u_{1}=(A+B)^{-1}A^{-\alpha}A^{\alpha}g$, from \eqref{oopp}, \eqref{resB} and Fubini's theorem, after changing variables we obtain
\begin{eqnarray}\label{mm}
u_{1}(t)=\frac{1}{2\pi i}\int_{0}^{t}(\int_{\Gamma_{\theta}}(-z)^{-\alpha}e^{z}(A+\frac{z}{s})^{-1}dz)s^{\alpha-1}A^{\alpha}g(t-s)ds, \quad t>0.
\end{eqnarray}

Since the integral $\int_{\Gamma_{\theta}}(-z)^{-\alpha}e^{z}A(A+\frac{z}{\rho})^{-1}dz$ converges absolutely uniformly in $\rho>0$, by the resolvent formula and the dominated convergence theorem we deduce that the map $\rho\mapsto \int_{\Gamma_{\theta}}(-z)^{-\alpha}e^{z}A(A+\frac{z}{\rho})^{-1}dz$ belongs to $C((0,\infty);\mathcal{L}(X_{0}))\cap L^{\infty}(0,\infty;\mathcal{L}(X_{0}))$. Therefore, \eqref{mm} implies 
$$
u_{1}(t)=\frac{1}{2\pi i}\int_{0}^{t}A^{-1}(\int_{\Gamma_{\theta}}(-z)^{-\alpha}e^{z}A(A+\frac{z}{s})^{-1}dz)s^{\alpha-1}A^{\alpha}g(t-s)ds, \quad t>0. 
$$ 
We conclude that $u_{1}(t)\in X_{1}$ for each $t>0$ and moreover
\begin{gather}\label{aindom}
Au_{1}(t)=\frac{1}{2\pi i}\int_{0}^{t}(\int_{\Gamma_{\theta}}(-z)^{-\alpha}e^{z}A(A+\frac{z}{s})^{-1}dz)s^{\alpha-1}A^{\alpha}g(t-s)ds, \quad t>0.
\end{gather}
Furthermore, for any $t>0$ we estimate
\begin{eqnarray}\nonumber
\lefteqn{\|Au_{1}(t)\|_{X_{0}}}\\\nonumber
&\leq&\frac{1}{2\pi }(\int_{0}^{t}\|\int_{\Gamma_{\theta}}(-z)^{-\alpha}e^{z}A(A+\frac{z}{s})^{-1}dz\|_{\mathcal{L}(X_{0})}s^{\alpha-1}ds)\|A^{\alpha}g\|_{L^{\infty}(0,t;X_{0})}\\\label{mm32}
&\leq&C_{5}t^{\alpha}\|g\|_{L^{\infty}(0,t;(X_{0},X_{1})_{\phi,\eta})},
\end{eqnarray}
for some constant $C_{5}>0$ depending only on $A$, $K$, $\theta$, $\phi$, $\eta$ and $\alpha$.

Concerning the part $u_2$, by \eqref{rgt} we have $u_{2}\in C((0,\infty);X_{1})$. By taking $\gamma\in(0,1-\frac{1}{q})$, due to \cite[(I.2.5.2)]{Am}, \cite[(I.2.9.6)]{Am} and \cite[Theorem III.4.6.5]{Am}, for any $t>0$ we can write $Au_{2}(t)=A^{1-\gamma}e^{-tA}A^{\gamma}u_{0}$. Therefore, by Lemma \ref{estimate1} we estimate $\|Au_{2}(t)\|_{X_{0}}\leq C_{6}t^{\gamma-1}\|u_{0}\|_{(X_{1},X_{0})_{\frac{1}{q},q}}$, $t>0$, for some constant $C_{6}>0$ depending only on $A$, $K$, $\theta$, $q$, and $\gamma$, so that \eqref{x1bound} follows by \eqref{mm32}.

If $g\in C([0,\infty);(X_{0},X_{1})_{\phi,\eta})$, then \cite[(I.2.5.2)]{Am} and \cite[(I.2.9.6)]{Am} imply $g\in C([0,\infty);\mathcal{D}(A^{\alpha}))$. Hence, by \eqref{aindom} we deduce $u_{1}\in C((0,\infty);X_{1})$ and \eqref{contregabs} follows by \eqref{AP}. \mbox{\ } \hfill $\square$


\begin{thebibliography}{99}

\bibitem{Am} H. Amann. {\em Linear and quasilinear parabolic problems. Vol. I. Abstract linear theory}. Monographs in Mathematics {\bf 89}, Birkh\"auser Verlag (1995).

\bibitem{ABHN} W. Arendt, C. J. K. Batty, M. Hieber, F. Neubrander. {\em Vector-valued Laplace transforms and Cauchy problems}. Monographs in Mathematics {\bf 96}, Birkh\"auser Verlag (2001).

\bibitem{CL} P. Cl\'ement, S. Li, {\em Abstract parabolic quasilinear equations and application to a groundwater flow problem}. Adv. Math. Sci. Appl. {\bf 3}, Special Issue, 17--32 (1993/94).

\bibitem{CP} P.\ Cl\'ement, J.\ Pr\"uss. {\em An operator-valued transference principle and maximal regularity on vector-valued $L_p$-spaces}. In: G. Lumer and L. Weis (eds.), Proc. of the 6th. International Conference on Evolution Equations, Marcel Dekker (2001).

\bibitem{CH} M. Cross, P. Hohenberg. {\em Pattern formation outside of equilibrium}. Rev. Mod. Phys. {\bf 65}, no. 3, 851--1123 (1993).

\bibitem{DG} G. Da Prato, P. Grisvard. {\em Sommes d' op\'erateurs lin\'eaires et \'equations diff\'erentielles op\'erationnelles}. J. Math. Pures Appl. (9) {\bf 54}, no. 3, 305--387 (1975).

\bibitem{DV} G. Dore, A. Venni. {\em On the closedness of the sum of two closed operators}. Math. Z. {\bf 196}, no. 2, 189--201 (1987). 

\bibitem{GM} J. Gil, G. Mendoza. {\em Adjoints of elliptic cone operators}. Amer. J. Math. {\bf 125}, no. 2, 357--408 (2003).

\bibitem{GKM} J. Gil, T. Krainer, G. Mendoza. {\em Resolvents of elliptic cone operators}. J. Funct. Anal. {\bf241}, no. 1, 1--55 (2006).

\bibitem{Ha} M. Haase. {\em The functional calculus for sectorial operators}. Operator Theory: Advances and Applications {\bf169}, Birkh\"auser Verlag (2006).

\bibitem{HH} R. Haller-Dintelmann, M. Hieber. {\em $H^{\infty}$-calculus for products of non-commuting operators}. Math. Z. {\bf 251}, no. 1, 85--100 (2005).

\bibitem{KS} M. Kaip, J. Saal. {\em The permanence of R-boundedness and property $(\alpha)$ under interpolation and applications to parabolic systems}. J. Math. Sci. Univ. Tokyo {\bf 19}, no. 3, 359--407 (2012).

\bibitem{KW1} N. Kalton, L. Weis. {\em The $H^{\infty}$-calculus and sums of closed operators}. Math. Ann. {\bf 321}, no. 2, 319--345 (2001).

\bibitem{KL} P. C. Kunstmann, L. Weis. {\em Perturbation theorems for maximal $L_p$-regularity}. Ann. Scuola Norm. Sup. Pisa Cl. Sci. (4) {\bf 30}, no. 2, 415--435 (2001).

\bibitem{LY} P. Li, S. T. Yau. {\em Estimates of eigenvalues of a compact Riemannian manifold}. AMS Proc. Symp. Pure Math. {\bf 36}, 205--239 (1980).

\bibitem{LY2} P. Li, S. T. Yau. {\em On the parabolic kernel of the Schr\"odinger operator}. Acta Math. {\bf 156}, no. 3-4, 153--201 (1986).

\bibitem{Li} A. Lichnerowicz. {\em G\'eometrie des groupes de transformations}. Travaux et Recherches Math\'ematiques III, Dunod (1958).

\bibitem{Le} M. Lesch. {\em Operators of Fuchs type, conical singularities, and asymptotic methods}. Teubner-Texte zur Mathematik {\bf136}, Teubner Verlag (1997).

\bibitem{Lu} A. Lunardi. {\em Analytic semigroups and optimal regularity in parabolic problems}. Modern Birkh\"auser Classics, Birkh\"auser Verlag (2012).

\bibitem{Mi} A. Mielke. {\em Instability and stability of rolls in the Swift-Hohenberg equation}. Commun. Math. Phys. {\bf 189}, no. 3, 829--853 (1997).

\bibitem{PT} L. A. Peletier, W. C. Troy. {\em Spatial patterns. Higher order models in physics and mechanics}. Progress in Nonlinear Differential Equations and Their Applications {\bf 45}, Birkh\"auser Verlag (2001).

\bibitem{Pa} P. Petersen. {\em Riemannian geometry}. Graduate Texts in Mathematics {\bf 171}, Springer Verlag (2016).

\bibitem{PM} Y. Pomeau, P. Manneville. {\em Wavelength selection in cellular flows}. Phys. Lett. A {\bf 75}, no. 4, 296--298 (1980).

\bibitem{PS} J. Pr\"uss, G. Simonett. {\em $H^\infty$-calculus for the sum of non-commuting operators}. Trans. Amer. Math. Soc. {\bf 359}, no. 8, 3549--3565 (2007).

\bibitem{PrS} J. Pr{\"u}ss, G. Simonett. {\em Moving interfaces and quasilinear parabolic evolution equations}. Monographs in Mathematics {\bf 105}, Birkh\"auser Verlag (2016).

\bibitem{Ro1} N. Roidos. {\em Closedness and invertibility for the sum of two closed operators}. Adv. Oper. Theory {\bf 3}, no. 3, 582--605 (2018).

\bibitem{Ro2} N. Roidos. {\em Complex powers for cone differential operators and the heat equation on manifolds with conical singularities}. Proceedings of the Amer. Math. Soc. {\bf 146}, no. 7, 2995--3007 (2018).

\bibitem{Ro} N. Roidos. {\em On the inverse of the sum of two sectorial operators}. J. Funct. Anal. {\bf 265}, no. 2, 208--222 (2013).

\bibitem{RS0} N. Roidos, E. Schrohe. {\em Bounded imaginary powers of cone differential operators on higher order Mellin-Sobolev spaces and applications to the Cahn-Hilliard equation}. J. Differential Equations {\bf 257}, no. 3, 611--637 (2014).

\bibitem{RS2} N. Roidos, E. Schrohe. {\em Existence and maximal $L^{p}$-regularity of solutions for the porous medium equation on manifolds with conical singularities}. Comm. Partial Differential Equations {\bf 41}, no. 9, 1441--1471 (2016).

\bibitem{RS3} N. Roidos, E. Schrohe. {\em The Cahn-Hilliard equation and the Allen-Cahn equation on manifolds with conical singularities}. Comm. Partial Differential Equations {\bf 38}, no. 5, 925--943 (2013).

\bibitem{SS1} E. Schrohe, J. Seiler. {\em Bounded $H_{\infty}$-calculus for cone differential operators}. J. Evol. Equ. {\bf 18}, no. 3, 1395--1425 (2018).

\bibitem{SS2} E. Schrohe, J. Seiler. {\em Ellipticity and invertibility in the cone algebra on $L_{p}$-Sobolev spaces}. Integr. Equ. Oper. Theory {\bf 41}, no. 1, 93--114 (2001).

\bibitem{Sh} E. Schrohe, J. Seiler. {\em The resolvent of closed extensions of cone differential operators}. Can. J. Math. {\bf 57}, no. 4, 771--811 (2005).

\bibitem{Schu} B. Schulze. {\em Pseudo-differential operators on manifolds with singularities}. Studies in Mathematics and Its Applications {\bf 24}, North Holland (1991).

\bibitem{Sei} J. Seiler. {\em The cone algebra and a kernel characterization of Green operators}. Approaches to Singular Analysis. Operator Theory: Advances and Applications {\bf 125}, Birkh\"auser Verlag (2001).

\bibitem{SH} J. Swift, P. Hohenberg. {\em Hydrodynamic fluctuations at the convective instability}. Phys. Rev. A {\bf 15}, no. 1, 319--328 (1977).

\bibitem{TGM} M. Tlidi, M. Georgiou, P. Mandel. {\em Transverse patterns in nascent optical bistability}. Phys. Rev. A {\bf 48}, no. 6, 4605--4609 (1993).

\bibitem{Ue} H. Uecker. {\em Diffusive stability of rolls in the two-dimensional real and complex Swift-Hohenberg equation}. Comm. Partial Differential Equations {\bf 24}, no. 11-12, 2109--2146 (1999).

\bibitem{W} L. Weis. {\em Operator-valued Fourier multiplier theorems and maximal $L_{p}$-regularity}. Math. Ann. {\bf 319}, no. 4, 735--758 (2001).

\end{thebibliography}
\end{document}